\theoremstyle{plain}
\newtheorem{theorem}{Theorem}[section]
\newtheorem{lemma}{Lemma}[section]
\newtheorem{corollary}{Corollary}[section]
\newtheorem*{MT}{Main Theorem}
\theoremstyle{definition}
\newtheorem{proposition}{Proposition}[section]
\theoremstyle{remark}
\newtheorem{remark}{Remark}[section]
\DeclareMathOperator{\Id}{\mathrm{Id}}
\newcommand\FanoPlane[1][1cm]{%
\begin{tikzpicture}[
mydot/.style={
  draw,
  circle,
  fill=white,
  inner sep=5pt}
]
\draw
  (0,0) coordinate (A) --
  (#1,0) coordinate (B) -- ($ (A)!.5!(B) ! {sin(60)*2} ! 90:(B) $) coordinate (C) -- cycle;
\coordinate (O) at
  (barycentric cs:A=1,B=1,C=1);
\draw (O) circle [radius=#1*1.717/6];
\draw (C) -- ($ (A)!.5!(B) $) coordinate (LC); 
\draw (A) -- ($ (B)!.5!(C) $) coordinate (LA); 
\draw (B) -- ($ (C)!.5!(A) $) coordinate (LB); 

\foreach \Nodo in {B,C,O,LC,LA,LB}
  \node[mydot] at (\Nodo) {};    
	\node[mydot] at (A) {7,14};
	\node[mydot] at (C) {1,8};
	\node[mydot] at (B) {6,13};
	\node[mydot] at (LC) {4,11};
	\node[mydot] at (LA) {2,9};
	\node[mydot] at (LB) {5,12};
	\node[mydot] at (O) {3,10};
\end{tikzpicture}%
}
\begin{document}

\title{Symmetric Generation of \(M_{22}\)}

\author[Z. Hasan]{Zahid Hasan}
\address{ZH: Department of Mathematics \\ California State University at
San Bernardino, \newline San Bernardino, CA 92407, USA}
\email{zhasan@csusb.edu}
\author[B. Lim]{Bronson Lim}
\address{BL: Department of Mathematics \\ University of Oregon \\ Eugene,
OR 97403, USA}
\email{bcl@uoregon.edu}
\subjclass[2010]{Primary 20F05; Secondary 20D05}

\maketitle

\begin{abstract}
	We give a computer-free proof that the Mathieu group \(M_{22}\) is a
	homomorphic image of the progenitor \(2^{\star 14}:L_3(2)\) factorized by three
	relations.
\end{abstract}

\section{Introduction}
\label{sec:intro}

\subsection{Symmetric generation of Mathieu groups.}
\label{ssec:mathieu-grps}

We construct a symmetric presentation for the middle Mathieu group \(M_{22}\).
The history of \(M_{22}\), and its siblings, is of independent interest. To see
how this article fits into the timeline it is worthwhile to recall it.

The Mathieu groups, \(M_{11},M_{12},M_{22},M_{23},M_{24}\), comprise a family of
five sporadic simple groups. Their discovery by \'Emile Mathieu in 1861 and 1873,
\cite{mathieu-61}, \cite{mathieu-73}, caused an uproar with even their
existence being questioned. Since these original papers, there have been several
alternative constructions which we mention presently.

Witt, \cite{witt1}, constructed the Mathieu groups as successive transitive
extensions of linear groups. Specifically, he used \(L_3(4)\) for the large
Mathieu groups and \(L_2(9)\) for the small ones. From this description it is
relatively easy to check the Mathieu groups are all simple and one can compute
various properties such as their order. In a related way, Witt also proved the
Mathieu groups are automorphism groups of Steiner systems.

Curtis found that the groups \(M_{12}\) and \(M_{24}\) have a highly symmetric
generating set, \cite{curt-mathieu}. From this generating set Curtis showed
that these groups were homorphic images of certain semi-direct products called
progenitors, see Section \ref{ssec:progenitors}. The group presentations constructed
with symmetric generation are called symmetric presentations. Finding and using
symmetric presentations has proved fruitful when dealing with larger finite
groups, particularly those related to the larger sporadic groups.

For example, Curtis used symmetric generation to give a new proof of the
existence of the first Janko group \(J_1\), \cite{curt-janko}. Elements of the
Janko group are exhibited as a word of length no more than 4 followed by an
element of \(L_2(11)\). This is a feat in and of itself, given that the smallest
permutation representation of \(J_1\) is on 266 letters. Later Curtis and Hasan,
\cite{cur-has}, developed an algorithm for multiplying elements using the
symmetric presentation.

Even more impressive is the use of symmetric generation to represent elements of
the Conway group \(\cdot\)O as a string of 64 symbols, \cite{cur-fair-09}. This
is a vast improvement over the prior smallest matrix representation, which is 24
dimensional (so 576 symbols), or the smallest permutation representation, which
is on 195,560 points.

Although many symmetric presentations are known, finding them is not
straightforward. There are sporadic groups which currently have no known
symmetric presentation, \cite[Section 6]{fair-progress}. These
include the Thompson group, Baby Monster, and the Monster. A symmetric
presentation for the Mathieu groups \(M_{11},M_{12},M_{23}\), and \(M_{24}\) can
be found in \cite[Sections 1,2, and 7]{Curt2}. In the present article we construct the first
involutory symmetric presentation of \(M_{22}\) using the transitive 14 point
action of \(L_3(2)\) and in doing so we finish the Mathieu family.

\subsection{Outline of paper.} 
\label{ssec:paper-outline}

In Section \ref{sec:prelims} we review symmetric generation of groups and prove
the Mathieu group \(M_{22}\) is symmetrically generated. In Section
\ref{sec:immediate-rels} we show our group \(\mathcal{G}\) possesses a
(maximal) subgroup \(\mathcal{M}\cong 2^3:L_3(2)\) and collect relations. In
Section \ref{sec:DCE} we perform manual double coset enumeration of
\(\mathcal{G}\) into double cosets of the form \(\mathcal{M\omega N}\). This is
an extension of the technique developed in \cite{Curt2}, first appearing in
\cite{wied}. In Section \ref{sec:main-result} we prove the Main Theorem. In
Section \ref{sec:comparison} we discuss how our presentation relates to the
standard one, list the maximal subgroups, and discuss other homomorphic images.

\subsection*{Acknowledgements}

We thank the referee for his/her invaluable input on an earlier version of this
manuscript.

\section{Preliminaries on symmetric generation and \(M_{22}\)}
\label{sec:prelims}

Throughout the paper, we use the standard Atlas notation for finite groups and
related concepts as described in \cite{ATLAS}.

\subsection{Progenitors and symmetric generation.}
\label{ssec:progenitors}

For a detailed discussion regarding the history of progenitors see \cite{Curt2}.
We begin with the definition.

Denote by \(2^{\star n}\) the \(n\)th free product of the cyclic group, \(C_2\), with
itself, i.e. \(2^{\star n}:=C_2\star\cdots\star C_2\). Picking generators
\(t_1,\ldots,t_n\) for the \(n\) copies of \(C_2\), we can write \(2^{\star
n}:=\langle t_1\rangle\star \cdots \star \langle t_n\rangle\cong \langle
t_1,\ldots,t_n\mid t_i^2 = 1\text{ for }i = 1,\ldots,n\rangle\). 

The symmetric group, \(\Sigma_n\), acts naturally on \(2^{\star n}\) by permuting
the generators. Let \(\mathcal{N}\) be a transitive subgroup of \(\Sigma_n\),
which we call the \textit{control group}. Define \(\mathcal{P}\) to be the
associated split extension:
\[
	\mathcal{P} = 2^{\star n}:\mathcal{N}.
\]
Then \(\mathcal{P}\) is called a \textit{progenitor} and the elements \(t_i\) are called
\textit{symmetric generators} \cite{Curt2,wied}\footnote{There is a more general
notion of progenitor where one replaces \(C_2\) with an arbitrary finite group
\(H\), see \cite[Section III]{Curt2} or \cite{fair-progress}. We will not need
this here.}.

Any element of \(\mathcal{P}\) can be written in the form \(\pi\omega\), where
\(\pi\in\mathcal{N}\) and \(\omega\) is a word in the symmetric generators.  We
obtain a distinguished copy of \(\mathcal{N}\) in  \(\mathcal{P}\) by taking
\(\omega\) to be the empty word. We will also refer to this subgroup as the
control group.

An epimorphic image of \(\phi:\mathcal{P\to G}\) of a progenitor
\(\mathcal{P}\) is called \textit{symmetrically generated} if 
\begin{itemize}
	\item The restriction to the control subgroup is an isomorphism onto its
		image.
	\item The \(\phi(t_i)\), for \(i = 1,\ldots,n\), are distinct involutions.
	\item The set \(\{\phi(t_i)\mid i = 1,\ldots,n\}\) generate
		\(\mathcal{G}\): \(\mathcal{G}=\langle \phi(t_i)\rangle_{i=1,\ldots,n}\).
\end{itemize}
If \(\mathcal{G}\) is symmetrically generated, we will abuse notation and write
\(t_i\) for both the symmetric generator in \(\mathcal{P}\) and its homomorphic
image in \(\mathcal{G}\). All of our computations will take place inside
\(\mathcal{G}\) so this should not cause confusion.

What we have called a symmetrically generated group is not the always the
definition used. One does not always require the stronger condition that the
\(t_i\) generate \(\mathcal{G}\). There are examples, e.g. \cite[Table
3.7-9]{Curt2}, where one also needs \(\phi(\mathcal{N})\) as well. However, our
construction will satisfy this condition so we include it.

If \(\mathcal{G}\) is symmetrically generated, we say the set
\(\{t_i\mid i=1,\ldots,n\}\) is a \textit{symmetric generating set} and the
corresponding quotient 
\[
	\mathcal{P}/\ker(\phi)\cong\mathcal{G}
\]
a \textit{symmetric presentation} of \(\mathcal{G}\).

As a corollary of the Feit-Thompson odd order theorem, any finite non-Abelian
simple group is symmetrically generated \cite[Lemma 3.6]{Curt2}.

Symmetrically generated groups often arise by factoring progenitors by relations
between words in the symmetric generators and permutations. That is, if
\(\pi_1,\ldots,\pi_m\in\mathcal{N}\) and \(\omega_1,\ldots,\omega_m\) are words
in the \(t_i\)s, we define
\[
	\mathcal{G} = \frac{\mathcal{P}}{\pi_1\omega_1,\ldots,\pi_m\omega_m}
\]
to mean the quotient of \(\mathcal{P}\) be the normal closure of \(\langle
\pi_1\omega_1,\ldots,\pi_m\omega_m\rangle\) in \(\mathcal{P}\).

\subsection{Symmetric generation of \(M_{22}\).} 
\label{ssec:main-result}

The main theorem is motivated by the following proposition. The reader is
referred to \cite{ATLAS} for the permutation representations used in this
section.  We will look inside the group structure of \(M_{22}\) for a symmetric
generating set consisting of involutions with control group \(L_3(2)\).  

\begin{proposition}
	There exist a symmetric generating set \(T=\{t_1,...,t_{14}\}\) of \(M_{22}\),
	such that \(|t_i|=2\).
	\label{prop:main}
\end{proposition}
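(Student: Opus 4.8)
The plan is to realize the required set $T$ as a single conjugacy orbit of involutions under a carefully chosen copy of the control group, and then to verify the three defining conditions directly; this refines the abstract existence guaranteed by the Feit--Thompson corollary by pinning down $n = 14$, forcing the generators to be involutions, and fixing the control group to be $L_3(2)$. First I would fix a subgroup $N \le M_{22}$ with $N \cong L_3(2)$; such a subgroup exists, for instance as a complement inside the maximal subgroup $\mathcal{M} \cong 2^3{:}L_3(2)$ isolated in Section~\ref{sec:immediate-rels}. The group $N$ has a transitive action on $14$ points whose point stabilizer is a subgroup $A_4 \le N$: since $A_4 \le S_4 \le N$ with $[S_4 : A_4] = 2$ and $S_4$ a point stabilizer of the natural $7$-point (Fano plane) action, the coset space $N/A_4$ has $14$ elements on which $N$ acts transitively, each Fano point splitting into the two cosets of $A_4$ inside its $S_4$-stabilizer. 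This is precisely the doubled Fano plane recorded in the figure, where the two labels $i$ and $i+7$ attached to each node mark the two points lying over a single Fano point.

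To turn these $14$ points into involutions, I would locate an involution $t \in M_{22}$ whose centralizer meets $N$ in exactly an $A_4$, that is $C_N(t) := N \cap C_{M_{22}}(t) \cong A_4$. Granting such a $t$, its $N$-orbit under conjugation $T := t^N = \{t_1, \dots, t_{14}\}$ has size $[N : C_N(t)] = 168/12 = 14$, realizing the $14$-point action above on a set of involutions. Note that $t$ cannot lie in $N$ itself: an involution of $L_3(2)$ has centralizer $D_8$ there, which would force an orbit of size $21$ rather than $14$, so the symmetric generators genuinely sit outside the control group. Identifying the correct $M_{22}$-class and verifying that $C_N(t)$ is the full $A_4$ (and neither a larger nor a smaller subgroup) is the delicate structural point, and I would settle it using the centralizer $C_{M_{22}}(A_4)$ together with the conjugacy and fusion data for $M_{22}$ recorded in \cite{ATLAS}.

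With $T$ in hand, the three conditions of symmetric generation are checked as follows. The elements of $T$ are involutions, being conjugates of $t$, and they are distinct because the orbit has $14$ elements. The control group $N$ permutes $T$ by conjugation, giving a homomorphism $N \to \mathrm{Sym}(T)$; its kernel is a normal subgroup of the simple group $N \cong L_3(2)$, and as the action is transitive on $14 > 1$ points the kernel is trivial, so the restriction to the control subgroup is an isomorphism onto its image in $\mathrm{Sym}(T) \cong \Sigma_{14}$.

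The main obstacle is the generation condition $\langle T \rangle = M_{22}$, and here the $N$-invariance does most of the work. Set $H := \langle T \rangle$. Since $N$ permutes $T$, it normalizes $H$, so $\langle H, N \rangle = HN$ is a subgroup of $M_{22}$ containing $N$, and $H \trianglelefteq HN$. Because $M_{22}$ is simple, it suffices to prove $HN = M_{22}$: then $H$ is a nontrivial normal subgroup of $M_{22}$ and therefore equals $M_{22}$. Thus generation reduces to showing that $N$ together with the orbit $T$ is not contained in any maximal subgroup of $M_{22}$, which I would verify against the finite, known list of maximal subgroups; the candidates that can contain a copy of $L_3(2)$ are limited, and one rules them out by comparing orders and showing that none can accommodate the full $14$-point orbit of exterior involutions while remaining proper. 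The rigorous, computer-free confirmation that $\langle T, N \rangle$ is exactly $M_{22}$, with no collapse to a smaller image, is precisely what the double coset enumeration of Section~\ref{sec:DCE} provides, so I would defer the sharpest form of this step to that machinery.
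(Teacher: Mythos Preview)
Your outline matches the paper's proof in its broad architecture: pick \(N\cong L_3(2)\) inside the maximal \(2^3{:}L_3(2)\), locate an involution \(t\) with \(C_N(t)\cong A_4\) so that \(|t^N|=14\), and then argue generation via simplicity of \(M_{22}\). The paper pins down \(t\) exactly as you hint, by noting that for a suitable choice of \(N\) one has \(C_{M_{22}}(A_4)\cong A_4\) and taking \(t\) to be an involution in that centralizer.

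Where your argument diverges is the generation step. You propose to check that \(HN\) lies in no maximal subgroup by surveying all maximals of \(M_{22}\) that contain a copy of \(L_3(2)\); this list is not short (it includes \(L_3(4)\), both classes of \(A_7\), and \(2^3{:}L_3(2)\)), and ruling each out ``by comparing orders'' is not quite enough. The paper instead works with the normalizer \(N_{M_{22}}(H)\) and a single maximal: since the products \(t_it_{i+7}\) generate the \(2^3\), one gets \(2^3{:}L_3(2)\le N_{M_{22}}(H)\); but \(t_1\in N_{M_{22}}(H)\) while \(t_1\notin 2^3{:}L_3(2)\), so the normalizer strictly contains a maximal subgroup and hence equals \(M_{22}\), forcing \(H\trianglelefteq M_{22}\). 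This avoids any case analysis.

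Finally, your deferral to the double coset enumeration is a logical misstep. The enumeration in Section~\ref{sec:DCE} bounds the order of the \emph{abstract} group \(\mathcal{G}\) from above; Proposition~\ref{prop:main} is the complementary piece, furnishing the surjection \(\mathcal{G}\twoheadrightarrow M_{22}\) and hence the lower bound of Corollary~\ref{cor:main-order}. The enumeration therefore cannot be invoked to finish the proposition; it relies on it.
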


\begin{proof}
	Consider the permutation representation of \(M_{22}\) on 176 letters given by
	the action of \(M_{22}\) on the set of cosets of the maximal subgroup \(A_7\).
	Within the maximal subgroup \(2^3:L_3(2)\) there are three class of subgroups
	isomorphic to \(L_3(2)=\mathcal{N}\).  In one of these classes there exists a
	point stabilizer isomorphic to \(A_4\) such that \(C_{M_{22}}(A_4)\cong A_4\). 
	
	Take an element of order 2 in the centralizer, say \(t\in C_{M_{22}}(A_4)\).
	We compute \(\mathcal{N}^t=C_{\mathcal{N}}(t)=A_4\). Thus 
	\(|t^\mathcal{N}|=|\mathcal{N:N}^t|=|\mathcal{N}:A_4| = 14\), so \(t\) has 14
	conjugates under the action of \(\mathcal{N}\).  We can label these conjugates as
	\(t_1,...,t_{14}\) so that the generators \(x\) and \(y\) of \(L_3(2)\) act as
	\(x=(1,2,3,4,5,6,7)(8,9,10,11,12,13,14)\) and
	\(y=(1,12)(2,3)(4,11)(5,8)(6,13)(9,10)\) on \(\{t_1,...,t_{14}\}\).

	We must show the elements \(t_i\) generate \(M_{22}\). Define \(H=\langle
	t_1,...,t_{14}\rangle\).  Since \(L_3(2)\) permutes the \(t_i\) we have
	\(L_3(2)\leq N_{M_{22}}(H)\).  Furthermore, \(2^3:L_3(2)\cong\langle
	L_3(2),t_1t_8\rangle\) is also a subgroup of the normalizer \(N_{M_{22}}(H)\)
	since \(t_i\) normalize \(H\).
	But \(2^3:L_3(2)\neq N_{M_{22}}(H)\), since \(t_1\notin 2^3:L_3(2)\). This
	implies that the maximal subgroup \(2^3:L_3(2)\) is a proper subgroup of
	\(N_{M_{22}}(H)\). This can only happen if \(H\) is normal. As \(H\) is
	nontrivial, it must be the entire group.
\end{proof}

\begin{remark}
	In terms of the standard permutation representation of \(M_{22}\) on 22 letters
	we have
	\begin{align*}
		t&=(2, 10)(3, 11)(4, 19)(5, 22)(6, 18)(7, 14)(9, 15)(12, 13) \\
		x&=(1, 12, 14, 10, 8, 17, 15)(2, 18, 22, 13, 3, 7, 9)(5, 6, 19, 21, 20, 11, 16) \\
		y&=(2, 9)(3, 4)(5, 6)(7, 13)(10, 15)(11, 19)(12, 14)(18, 22)  \\
		\mathcal{N}&=\langle x,y\rangle.
	\end{align*}
	By Proposition \ref{prop:main} we have \(M_{22}=\langle \mathcal{N},t\rangle\).
	\label{rem:main}
\end{remark}

\subsection{The progenitor considered.}
In lieu of Proposition \ref{prop:main}, we construct the progenitor
\(2^{*14}:L_3(2)\) 
\[
	2^{*14}:L_3(2) = \langle x, y, t | x^7, y^2, (xy)^3, [x,y]^4, t^2, [t^{x^2},
	yx^{-1}], [t,y]\rangle.
\]
The permutation representation, \(L_3(2)\hookrightarrow \Sigma_{14}\), is
discussed in Section \ref{ssec:l32-action}. Our main result is:

\begin{MT}
	The Mathieu group \(M_{22}\) is a homomorphic image of the progenitor
	\(2^{*14}:L_3(2)\). In particular, the group
	\[
		\mathcal{G} =
		\frac{2^{\star 14}:L_3(2)}{(yt^{x^2})^5,(xyx^2t^x)^5,(xt)^8}
	\]
	is isomorphic to \(M_{22}\).
	\label{thm:main}
\end{MT}

We have the following direct corollary of Proposition \ref{prop:main} and 
Remark \ref{rem:main}:

\begin{corollary}
	\( |\mathcal{G}|\geq 443,520\).
	\label{cor:main-order}
\end{corollary}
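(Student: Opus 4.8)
The plan is to produce a surjective homomorphism $\mathcal{G}\twoheadrightarrow M_{22}$; since $|M_{22}|=443{,}520$, this immediately yields the stated lower bound. The whole point is that $M_{22}$ already \emph{is} an image of the progenitor by Proposition~\ref{prop:main}, so the only genuinely new thing to check is that the three extra relators defining $\mathcal{G}$ die in $M_{22}$.

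First I would build the map at the level of the progenitor. By Proposition~\ref{prop:main} the group $M_{22}$ is symmetrically generated by involutions $t_1,\dots,t_{14}$ with control group $L_3(2)=\langle x,y\rangle$ acting on them by the prescribed $14$-point permutations; equivalently, the elements $t,x,y\in M_{22}$ exhibited in Remark~\ref{rem:main} satisfy all the defining relations of the progenitor $2^{\star 14}:L_3(2)$, including the stabilizer relations $[t^{x^2},yx^{-1}]$ and $[t,y]$ that cut out $C_{\mathcal{N}}(t)=A_4$. Consequently the assignment $x\mapsto x$, $y\mapsto y$, $t\mapsto t$ respects the semidirect product structure and extends to a homomorphism $\phi\colon 2^{\star 14}:L_3(2)\to M_{22}$. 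Since the images $\phi(t_i)$ generate $M_{22}$ by Proposition~\ref{prop:main}, the map $\phi$ is onto.

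It then remains to show that $\phi$ factors through $\mathcal{G}$, i.e. that the three relators $(yt^{x^2})^5$, $(xyx^2t^x)^5$, and $(xt)^8$ lie in $\ker\phi$. For this I would use the explicit permutations of Remark~\ref{rem:main}: working inside the $22$-point representation, compute the conjugate involutions $t^x=x^{-1}tx$ and $t^{x^2}=x^{-2}tx^2$ as permutations in $\Sigma_{22}$, form the three words $yt^{x^2}$, $xyx^2t^x$, and $xt$, and verify that the prescribed powers ($5$th, $5$th, and $8$th, respectively) are the identity. Each of these is a finite permutation computation. Once all three relators are seen to map to the identity, $\phi$ descends to a surjection $\bar\phi\colon\mathcal{G}\twoheadrightarrow M_{22}$, whence $|\mathcal{G}|\ge|M_{22}|=443{,}520$.

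The only real obstacle is this last verification; everything else is formal. To keep it manageable and computer-free, I would organize the computation via the $14$-point action of $L_3(2)$ on $\{t_1,\dots,t_{14}\}$, reading off the conjugates $t^x$ and $t^{x^2}$ as other symmetric generators $t_{j}$ rather than recomputing each product from scratch, and then tracking the action of the short words $yt^{x^2}$, $xyx^2t^x$, and $xt$ on the $22$ points to confirm that the required powers collapse to $\mathrm{Id}$.
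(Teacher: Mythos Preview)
Your proposal is correct and follows exactly the paper's approach: the paper's proof is the single sentence ``One computes that the relations defining \(\mathcal{G}\) are satisfied by the \(x,y,t\) defined in Remark~\ref{rem:main},'' which is precisely the verification you outline. You have simply spelled out in detail the logic (surjection from the progenitor, factoring through \(\mathcal{G}\) once the three relators are checked) that the paper leaves implicit.
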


\begin{proof}
	One computes that the relations defining \(\mathcal{G}\) are satisfied by the
	\(x,y,t\) defined in Remark \ref{rem:main}.
\end{proof}

\subsection{14 point action of $L_3(2)$.}
\label{ssec:l32-action}

Let \(\mathcal{N} = L_3(2)\) be the automorphism group of the Fano plane,
\(\mathbb{P}^2(\mathbb{F}_2)\). The permutation representation used in this
paper is not the standard one, the action on the 7 points of
\(\mathbb{P}(\mathbb{F}_2)\), as it is on 14 points.

The stabilizer of any of the seven points in the Fano plane has an index 2
subgroup isomorphic to \(A_4\). Taking the coset action of \(L_3(2)\) on \(A_4\)
gives the 14 point action considered here. Since the \(A_4\)s are not maximal,
this action is imprimitive with two blocks of size seven.

Label the points of the Fano plane \(1,\ldots,7\). In terms of the cosets above,
these points correspond to the isomorphic copy of \(A_4\subset\mathcal{N}^i\) in
the stabilizer of each point. We will view the other cosets in each point
stabilizer as the number \(i+7\), e.g. the non-\(A_4\) coset in the point
stabilizer \(\mathcal{N}^1\) is \(8\). 

The presentation \(L_3(2) = \langle x,y\mid x^7,y^2,(xy)^3,[x,y]^4\rangle\) is
standard. With our action, we may choose \(x\) to be the permutation
\[
	x\sim (1,2,3,4,5,6,7)(8,9,10,11,12,13,14)
\]
and it is possible to take the generating involution, \(y\), to be
\[
	y\sim (1,12)(2,3)(4,11)(5,8)(6,13)(9,10).
\]

In Figure \ref{fig:fano-plane}, we have labelled each vertex appropriately. This
labelling will be important later when we are proving relations.

This permutation representation is transitive. The point stabilizer of any of
the points, say \(7\), has three orbits:
\(\{\{7\},\{14\},\{1,\ldots,6,8\ldots,13\}\}\). If two points do not lie over
the same vertex, as in Figure \ref{fig:fano-plane}, then their two point stabilizer is
trivial. If they do lie over the same vertex, say \(\{1,8\}\), then the two
point stabilizer is isomorphic to \(A_4\).

\begin{figure}[h]
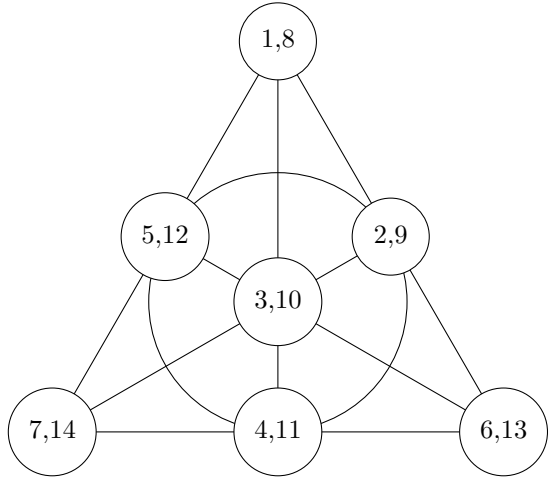

	\FanoPlane[6cm]
	\caption{The Fano Plane} 
	\label{fig:fano-plane}
\end{figure}

\subsection{Notation.} 
\label{ssec:notation}

We use the permutation action above and the association \(t = t_7\) to define
\(2^{\star 14}:L_3(2)\). Then the group in question is:
\[
	\mathcal{G} = \frac{2^{\star 14}:L_3(2)}{ (yt^{x^2})^5, (xyx^2t^x)^5, (xt)^8}.
\]

We make the following definitions:
\begin{align*}
	& s_i = t_it_{i+7}\text{ for }1\leq i\leq 14; \\
	& \mathcal{N} = \langle x,y\rangle\cong L_3(2); \\
	& \mathcal{M} = \langle x,y,s_i\rangle = \langle x,y,s_1\rangle.
\end{align*}
The subscript is understood to be taken modulo 14, i.e. \(t_{15} = t_1\).

We denote the \textit{point stabilizer} in \(\mathcal{N}\) of the points
\(i_1,...,i_j\) by \(\mathcal{N}^{i_1...i_j}\) and the \textit{coset
stabilizer} of \(\mathcal{M}t_{i_1},...,t_{i_j}\) by \(\mathcal{N}^{(i_1...i_j)}\).
Neccesarily we have \(\mathcal{N}^{i_1...i_j}\leq \mathcal{N}^{(i_1...i_j)}\) but
this containment is typically proper.  For two words \(\omega_1\) and
\(\omega_2\) in the \(t_i\)s we define the equivalence relation \(\omega_1\sim
\omega _2\) if \(\mathcal{M}\omega_1=\mathcal{M}\omega_2\).

\subsection{Outline of proof.} 
\label{ssec:proof-outline}

We show \(\mathcal{G}\) acts faithfully, transitively, and primitively on the
set of cosets \(\{\mathcal{M}\omega\}\). We then use Iwasawa's lemma,
\cite{Iw1,wilson-09}, to deduce \(\mathcal{G}\) is simple. We also show \(\mathcal{G}\)
has order 443,520. This is sufficient to conclude \(\mathcal{G}\cong
M_{22}\).

\section{\(\mathcal{M}\cong 2^3:L_3(2)\)}
\label{sec:immediate-rels}

We prove \(\mathcal{M}\cong 2^3:L_3(2)\). We do so by proving the \(s_i\)
generate a group isomorphic to \(2^3\). We also collect relations that will be
used when proving coset collapsing later on.

\subsection{Immediate relations.}

Recall our notational conventions: we write \(t_i\) for the homomorphic image of
\(t_i\) in \(\mathcal{G}\) and \(t=t_7\).

The relation \((yt^{x^2})^5=\text{Id}\) implies \(y=t_2t_3t_2t_3t_2\). This gives a family of
relations \(y=(t_2t_3t_2t_3t_2)^g\) where \(g\in C_\mathcal{N}(y)\) centralizes \(y\). The
complete list is
\begin{align*}
	y &= t_2t_3t_2t_3t_2 = t_1t_{12}t_1t_{12}t_1 = t_9t_{10}t_9t_{10}t_9=
	t_5t_8t_5t_8t_5 \\
	&= t_3t_2t_3t_2t_3 = t_{12}t_1t_{12}t_1t_{12} = t_{10}t_9t_{10}t_9t_{10} =
	t_8t_5t_8t_5t_8.
\end{align*}

Conjugating these relations gives us, for \(i\neq j\),
\begin{equation}
	\alpha_{i,j} = t_it_jt_it_jt_i
	\label{rel:alpha}
\end{equation}
for some involution \(\alpha_{i,j}\in \mathcal{N}\). For example, \(y =
\alpha_{2,3} = \alpha_{5,8} = \alpha_{9,10}=\alpha_{10,9}\). Notice also that
\(\alpha_{i,j} = \alpha_{j,i}\) for \(i\neq j\).

The relation \( (xyx^2 t^x)^5=\Id\) implies \(
(xyx^2)^{-1}=t_1t_{12}t_8t_5t_1\). The permutation \( (xyx^2)^{-1}\) is of order
4 and the centralizer in \(\mathcal{N}\) is the cyclic of order 4. Conjugation
then gives
\[
	(xyx^2)^{-1} = t_1t_{12}t_8t_5t_1=t_5t_1t_{12}t_8t_5 =t_8t_5t_1t_{12}t_8 
	= t_{12}t_8t_5t_1t_{12}.
\]

Acting by \(\mathcal{N}\) gives us a family of relations
\begin{equation}
	\beta_{i,j,\bar{i}} = t_it_jt_{\bar{i}}t_{\bar{j}}t_i =
	t_{\bar{j}}t_it_jt_{\bar{i}}t_{\bar{j}}
	\label{rel:beta}
\end{equation}
where \(\bar{i} \equiv_{14} i + 7\). For example, \( (xyx^2)^{-1}=\beta_{1,12,8} =
\beta_{5,1,12} = \beta_{8,5,1} = \beta_{12,8,5}\). 

\subsection{The isomorphism \(\mathcal{M}\cong 2^3:L_3(2)\)} 

To see the isomorphism between the subgroup \(\mathcal{M}\) and the holomorphism
group \(2^3:L_3(2)\), we only need to establish \(\langle s_i\rangle\cong 2^3\).
The semidirect product structure is then inhertied from \(\mathcal{G}\). Recall,
\(s_i = t_it_{i+7}\), i.e.  \(s_1 = t_1t_8\) and \(s_8 = t_8t_1\). 

\begin{lemma}
	The elements \(s_i\) are involutions and \(s_i = s_{i+7}\) for \(i =
	1,\ldots,7\).
	\label{lem:s-invols}
\end{lemma}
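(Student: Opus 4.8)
The plan is to notice that both assertions are controlled by a single commutator. Reading subscripts modulo $14$, we have $s_{i+7}=t_{i+7}t_{i}=(t_it_{i+7})^{-1}=s_i^{-1}$, so the equality $s_i=s_{i+7}$ is equivalent to $s_i^2=\Id$; hence the second claim is a formal consequence of the first. Moreover $s_i^2=\Id$ is the same as $s_i=s_i^{-1}=t_{i+7}t_i$, i.e.\ as $t_i$ and $t_{i+7}$ commuting. Thus the entire lemma reduces to proving $[t_i,t_{i+7}]=\Id$ for $i=1,\dots,7$.

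Next I would reduce the seven cases to one. The pairs $\{i,i+7\}$ are precisely the fibres of the $14$-point action sitting over the seven vertices of the Fano plane, and $\mathcal{N}$ permutes these fibres exactly as it permutes the vertices, hence transitively; for instance $x$ carries the fibre $\{7,14\}$ onto $\{1,8\}$ and so conjugates $s_7$ to $s_1$. Consequently the seven elements $s_i$ are $\mathcal{N}$-conjugate, and it is enough to show that $t_1$ and $t_8$ commute, equivalently that $s_1=t_1t_8$ is an involution.

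The mechanism for the remaining case is the $\beta$-relation. Put $\beta=(xyx^2)^{-1}=t_1t_{12}t_8t_5t_1$. The displayed cyclic expressions for $(xyx^2)^{-1}$ show that $\beta$ acts on $\{1,5,8,12\}$ as the $4$-cycle $(1,5,8,12)$, so $\beta^2$ acts as $(1,8)(5,12)$ and in particular interchanges $1$ and $8$. Since $\beta^2\in\mathcal{N}$ is an involution, we have $t_8=\beta^2t_1\beta^2$, and hence
\[
	s_1=t_1t_8=t_1\beta^2t_1\beta^2=(t_1\beta^2)^2.
\]
So $s_1$ is an involution if and only if $(t_1\beta^2)^4=\Id$, i.e.\ if and only if the product $t_1\beta^2$ of the two involutions $t_1,\beta^2$ has order dividing $4$.

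This last order estimate is the main obstacle. The subtlety is that squaring the identity $s_1=(t_1\beta^2)^2$ and simplifying with the $\alpha$- and $\beta$-relations only reproduces $s_1^2$, so those relations by themselves do not pin down the order; I expect the third defining relation $(xt)^8=\Id$, which expresses $x$ as a length-$8$ word in the symmetric generators and thereby couples $\mathcal{N}$ to the $t_i$, to be indispensable here. Concretely I would expand $(t_1\beta^2)^4$ using the explicit forms $y=t_2t_3t_2t_3t_2$ and $\beta=t_1t_{12}t_8t_5t_1$ together with the word for $x$, and grind the result down to $\Id$. Once $(t_1t_8)^2=\Id$ is in hand, conjugating it around by $\mathcal{N}$ delivers $s_i^2=\Id$ for all $i$, and with it the whole lemma; this final reduction is the only non-formal computation.
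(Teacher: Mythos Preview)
Your reductions are sound: the two claims are equivalent, and by transitivity of $\mathcal{N}$ on the seven fibres it suffices to treat a single $i$. The reformulation $s_1=(t_1\beta^2)^2$ is also correct. The problem is what comes next.

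You assert that the $\alpha$- and $\beta$-relations alone cannot force $s_1^2=\Id$ and that the relation $(xt)^8$ must enter. This is wrong, and it is the substantive gap in your argument. The paper proves the lemma using only the $\alpha$- and $\beta$-relations: starting from $\alpha_{1,12}=t_1t_{12}t_1t_{12}t_1$, one inserts $t_8t_5t_5t_8=\Id$ to obtain $\alpha_{1,12}=(t_1t_{12}t_8)\,t_5\cdot t_5t_8t_1t_{12}t_1$, then uses $\beta_{1,12,8}$ to peel off the bracketed piece, yielding $t_8t_1=\beta_{1,12,8}^{-1}\alpha_{1,12}\,t_5t_8t_1t_{12}$. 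Substituting this into $s_8^2=t_8t_1\cdot t_8t_1$ and applying one more $\alpha$ and one more $\beta$ collapses the whole word to a product of four explicit permutations of $\mathcal{N}$, which one checks is the identity. No appeal to $(xt)^8$ is made or needed.

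So your proposal stops exactly where the real work begins: you announce that you would ``expand $(t_1\beta^2)^4$ \ldots\ and grind the result down to $\Id$'' but never do it, and the strategy you outline for that grind (bringing in $(xt)^8$) is based on a false diagnosis. The efficient route is the paper's insertion trick, which converts the problem into a short identity among known permutations of $\mathcal{N}$.
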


\begin{proof}
	It suffices to show \(s_8\) is an involution. We use Relation
	(\ref{rel:alpha}):
	\begin{align*}
	\alpha_{1,12}&=t_1t_{12}t_1t_{12}t_1 =t_1t_{12}t_8t_5t_5t_8t_1t_{12}t_1 \\
	&=(t_1t_{12}t_8)t_5t_5t_8t_1t_{12}t_1 =\beta_{1,12,8}t_1t_5t_8t_1t_{12}t_1
	\end{align*} 
	so that
	\(t_8t_1=\beta_{1,12,8}^{-1}\alpha_{1,12}t_5t_8t_1t_{12}\). We then have:
	\begin{align*}
		s_8^2&= t_8t_1t_8t_1=t_8t_1\beta_{1,12,8}^{-1}\alpha_{1,12}t_5t_8t_1t_{12} \\
		&= \beta_{1,12,8}^{-1}\alpha_{1,12}t_1(t_8t_5t_8)t_1t_{12}
		=\beta_{1,12,8}^{-1}\alpha_{1,12}t_1(\alpha_{8,5}t_8t_5)t_1t_{12} \\
		&=\beta_{1,12,8}^{-1}\alpha_{1,12}\alpha_{8,5}t_{12}t_8t_5t_1t_{12}
		=\beta_{1,12,8}^{-1}\alpha_{1,12}\alpha_{8,5}\beta_{12,8,5} =\Id. \\
  \end{align*}
\end{proof}

Note that in the proof of Lemma \ref{lem:s-invols}, the relation \(
t_8t_1=\beta_{1,12,8}^{-1}\alpha_{1,12}t_5t_8t_1t_{12}\) implies
\(t_5t_8t_1=\beta_{1,12,8}^{-1}\alpha_{1,12}t_8t_1t_{12}\) tells us how to move
the symmetric generators past the elements \(s_i\). We deduce the family of
relations:
\begin{equation}
	\delta_{i,j}s_it_j = t_{\bar{j}}s_i,
	\label{rel:delta}
\end{equation}
where again \(\bar{j}\equiv_{14}j+7\). For example, \(\delta_{1,12} =
\beta_{1,12,8}^{-1}\alpha_{1,12}\). The \(\delta_{i,j}\) are involutions and
\(\delta_{i,j}\) pointwise fixes \(\{j,\bar{j}\}\) and interchanges \(i\) with \(\bar{i}\). So if we
conjugate Relation (\ref{rel:delta}) by \(t_{\bar{j}}\) we get
\[
	\delta_{i,j}t_{\bar{j}} s_i = s_it_j
\]
which implies \(\delta_{i,j} = \delta_{i,\bar{j}}^{-1} =
\delta_{i,\bar{j}}\). Moreover, since \(s_i = s_{i+7}\), we have
\(\delta_{i,j} = \delta_{\bar{i},j}\). 

\begin{lemma}
	The group \(\langle s_i\rangle_{i=1,\ldots,7}\) is a commutative 2-group.
	\label{lem:s-comms}
\end{lemma}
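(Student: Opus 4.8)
The plan is to reduce the statement to a single commutation relation. By Lemma \ref{lem:s-invols} each $s_i$ is an involution, so if I can show $s_is_j = s_js_i$ for all $i,j$, then $\langle s_i\rangle_{i=1,\ldots,7}$ is generated by pairwise-commuting involutions, hence is an elementary abelian $2$-group, which is in particular a commutative $2$-group. Thus the entire content of the lemma is the identity $s_is_j = s_js_i$.

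To prove this I would compute $s_is_j$ by pushing the factor $s_i$ rightward through $s_j = t_jt_{\bar j}$, applying Relation (\ref{rel:delta}) one symmetric generator at a time. Since $\delta_{i,j}$ is an involution, (\ref{rel:delta}) may be rewritten as $s_it_j = \delta_{i,j}t_{\bar j}s_i$, giving $s_is_j = s_it_jt_{\bar j} = \delta_{i,j}t_{\bar j}s_it_{\bar j}$. Applying (\ref{rel:delta}) a second time to $s_it_{\bar j}$, and using $\bar{\bar j} = j$ together with the symmetry $\delta_{i,\bar j} = \delta_{i,j}$ recorded after (\ref{rel:delta}), yields $s_it_{\bar j} = \delta_{i,j}t_js_i$, so that $s_is_j = \delta_{i,j}t_{\bar j}\delta_{i,j}t_js_i$.

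The final step invokes the structural facts about $\delta_{i,j}$ established after Relation (\ref{rel:delta}): it is an involution in $\mathcal{N}$ fixing $\{j,\bar j\}$ pointwise. Because it fixes the index $\bar j$, the permutation $\delta_{i,j}$ commutes with the symmetric generator $t_{\bar j}$, and since $\delta_{i,j}^2 = \Id$ the two copies cancel:
\[
	s_is_j = \delta_{i,j}^2\, t_{\bar j}t_js_i = t_{\bar j}t_js_i = s_{\bar j}s_i = s_js_i,
\]
where $t_{\bar j}t_j = s_{\bar j}$ by definition of $s_{\bar j}$ (here $\bar j + 7 \equiv_{14} j$) and $s_{\bar j} = s_j$ by Lemma \ref{lem:s-invols}. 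I expect the only delicate points to be bookkeeping: the index arithmetic modulo $14$, and the correct use of the symmetries $\delta_{i,j} = \delta_{i,\bar j}$ (and $\delta_{i,j} = \delta_{\bar i,j}$) so that both applications of (\ref{rel:delta}) produce the \emph{same} permutation $\delta_{i,j}$ — this coincidence is precisely what drives the cancellation. No relation beyond (\ref{rel:delta}) and its already-derived symmetries is required.
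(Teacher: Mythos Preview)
Your argument is correct and is essentially the paper's own proof: both push one \(s\) through the two symmetric generators comprising the other via two applications of Relation~(\ref{rel:delta}), then use that the resulting \(\delta\)'s coincide (by the symmetry \(\delta_{i,j}=\delta_{i,\bar j}\)) and square to the identity. The only cosmetic difference is that the paper first reduces to the single instance \(s_1s_7=s_7s_1\) (implicitly using the \(2\)-transitivity of \(\mathcal{N}\) on the seven Fano points) and moves \(s_7\) leftward through \(t_1t_8\), whereas you carry out the computation for arbitrary \(i,j\) and move \(s_i\) rightward through \(t_jt_{\bar j}\); the mechanics and the key cancellation \(\delta_{i,j}^2=\Id\) are identical.
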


\begin{proof}
	It is enough to show \(s_1s_7 = s_7s_1\). We compute
	\begin{align*}
		s_1s_7 &= t_1(t_8s_7) = t_1 \delta_{1,7} s_7t_1 \\
		&= \delta_{1,7} (t_1s_7)t_1 = \delta_{1,7}^2 s_7s_1 \\
		&= s_7s_1.
	\end{align*}
\end{proof}

\begin{lemma}
	Let \(i,j,k,l\) be four points in the Fano Plane (Figure \ref{fig:fano-plane})
	such that no three of them are collinear, then
	\[
		s_is_j = s_ks_l.
	\]
	\label{lem:s-prods}
\end{lemma}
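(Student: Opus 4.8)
The plan is to recognize the relation as the defining relation of the elementary abelian group $2^3$ modeled on the Fano plane, and to reduce the general statement to a single computation by exploiting the $\mathcal{N}$-action. First I record two structural observations. A four-set $\{i,j,k,l\}$ of Fano points contains no collinear triple precisely when it is the complement of a line: any line $m\neq\ell$ meets $\ell$ in exactly one point and so has exactly two of its three points outside $\ell$, whence the four-point complement of $\ell$ contains no line; and a short count ($\binom{7}{4}=35$ four-sets, of which $7\cdot 4=28$ contain a line, since no four-set can contain two lines) shows these seven complements are the only no-three-collinear four-sets. Next, since each $s_i$ is an involution and the $s_i$ commute (Lemmas \ref{lem:s-invols} and \ref{lem:s-comms}), the desired identity $s_is_j=s_ks_l$ is equivalent to the symmetric relation $s_is_js_ks_l=\Id$, which is independent of how the four indices are paired.

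The second step is a symmetry reduction. The generators $x,y$ preserve the pairing $i\leftrightarrow\bar i=i+7$, so $\mathcal{N}$ acts on the seven pairs $\{i,\bar i\}$, i.e. on the seven points of the Fano plane, exactly as $L_3(2)$; consequently $s_i^{\,g}=t_{gi}t_{\overline{gi}}=s_{gi}$ for every $g\in\mathcal{N}$, where $gi$ denotes the image of $i$ under the Fano-point action. Conjugating a relation $s_is_js_ks_l=\Id$ by $g$ therefore yields $s_{gi}s_{gj}s_{gk}s_{gl}=\Id$. Since $L_3(2)$ is transitive on the seven lines of the Fano plane it is transitive on their seven complements, so as $g$ ranges over $\mathcal{N}$ the four-set $\{gi,gj,gk,gl\}$ ranges over all no-three-collinear four-sets. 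Hence it suffices to prove a single instance of the relation, and every other instance follows by conjugation. Equivalently, the kernel of the surjection from the permutation module onto $\langle s_i\rangle$ is an $\mathbb{F}_2\mathcal{N}$-submodule and the relevant $3$-dimensional module is irreducible, so placing one nonzero relation of this shape in the kernel forces all of them.

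The final, and main, step is to verify one base case, for a convenient complement such as $\{1,2,3,5\}$ (the complement of the line $\{4,6,7\}$ of Figure \ref{fig:fano-plane}). I would expand $s_1s_2s_3s_5=t_1t_{\bar 1}t_2t_{\bar 2}t_3t_{\bar 3}t_5t_{\bar 5}$ into a word of length eight in the symmetric generators and collapse it exactly as in the proof of Lemma \ref{lem:s-invols}: insert trivial pairs $t_mt_m=\Id$ to expose the patterns of Relations (\ref{rel:alpha}), (\ref{rel:beta}) and (\ref{rel:delta}), replace each length-five pattern by its value $\alpha_{\ast}$ or $\beta_{\ast}$ in $\mathcal{N}$, apply the $\delta$-relations to push symmetric generators across the $s_i$, and use commutativity of the $s_i$ to regroup. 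The delicate point, which I expect to be the real obstacle, is the bookkeeping: each application of $\alpha$, $\beta$, or $\delta$ deposits a permutation of $\mathcal{N}$, and the rewriting must be arranged so that the accumulated product of these permutations collapses to the identity, leaving $\Id$. Choosing the representative four-set so that the required instances of $\alpha$, $\beta$, and $\delta$ are among those listed explicitly after Relations (\ref{rel:alpha})--(\ref{rel:delta}) should make this cancellation transparent.
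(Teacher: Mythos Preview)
Your reduction is correct and coincides with the paper's implicit reasoning: since the $s_i$ are commuting involutions the identity $s_is_j=s_ks_l$ is equivalent to the symmetric relation $s_is_js_ks_l=\Id$, and since $\mathcal{N}$ acts on the $s_i$ through its Fano-plane action transitively on lines (hence on their four-point complements), one instance suffices. The paper compresses all of this into the single sentence ``It is enough to show $s_5s_7=s_3s_4$.''

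Where you diverge is in the base case, and there you have not actually proved anything. You outline a plan to expand $s_1s_2s_3s_5$ as a length-eight word in the $t_i$ and collapse it via Relations~(\ref{rel:alpha}), (\ref{rel:beta}), (\ref{rel:delta}), while explicitly flagging the permutation bookkeeping as ``the real obstacle'' you have not yet overcome. That approach is viable in principle, but leaving it unexecuted means the proof is incomplete at precisely the point that carries all the content.

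The paper's base-case argument is much cleaner and sidesteps the long word entirely. It evaluates the single expression $s_4s_7t_2$ in two ways, using commutativity (Lemma~\ref{lem:s-comms}) to choose which $s$-factor meets $t_2$ first and then applying Relation~(\ref{rel:delta}) twice in each branch. One order gives $\delta_{2,7}\delta_{2,5}\,t_2\,s_5s_7$, the other gives $\delta_{2,4}\delta_{2,3}\,t_2\,s_3s_4$; a single check in $\mathcal{N}$ that $\delta_{2,7}\delta_{2,5}=\delta_{2,4}\delta_{2,3}$, followed by cancelling $t_2$, finishes the proof. This ``compute one thing two ways'' device reduces the bookkeeping from an eight-letter collapse to tracking just four $\delta$'s, and is the idea worth taking away here.
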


\begin{proof}
	It is enough to show \(s_5s_7 = s_3s_4\). We compute \(s_4s_7t_2\) in two
	different ways:
	\begin{align*}
		s_4(s_7t_2) &= s_4\delta_{2,7}t_9s_7 = \delta_{2,7}s_5t_9s_7 \\
		&= \delta_{2,7}\delta_{2,5} t_2s_5s_7
	\end{align*}
	and
	\begin{align*}
		s_4s_7t_2 &= s_7(s_4t_2) = s_7\delta_{2,4} t_9s_4 \\
		&= \delta_{2,4}s_3t_9s_4 = \delta_{2,4}\delta_{2,3}t_2s_3s_4.
	\end{align*}
	Since \(\delta_{2,7}\delta_{2,5} = \delta_{2,4}\delta_{2,3}\) we are finished.
\end{proof}

\begin{lemma}
	If \(i,j,k\) are collinear in the Fano Plane (Figure \ref{fig:fano-plane}),
	then
	\[
		s_is_js_k = \Id.
	\]
	\label{lem:s-prod-rule}
\end{lemma}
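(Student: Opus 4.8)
The plan is to reduce the seven collinear relations to a single one using the symmetry of the control group, and then to settle that one case by a direct word calculation of the kind used in Lemmas \ref{lem:s-comms} and \ref{lem:s-prods}.

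The symmetry reduction goes as follows. The control group \(\mathcal{N}\cong L_3(2)\) is the full automorphism group of the Fano plane and hence acts transitively on its seven lines (a line stabilizer is an \(S_4\), of index \(7\)). Conjugation by \(g\in\mathcal{N}\) is an automorphism of \(\mathcal{G}\) sending \(t_i\mapsto t_{i^g}\); since \(\mathcal{N}\) permutes the pairs \(\{i,\bar i\}\) (the fibres over the Fano points) as it permutes the points of Figure \ref{fig:fano-plane}, it carries \(s_p=t_pt_{\bar p}\) to \(s_{p^g}\). Thus it sends the relator \(s_is_js_k\) attached to a line \(\{i,j,k\}\) to the relator attached to the line \(\{i,j,k\}^{g}\). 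Consequently, once \(s_is_js_k=\Id\) is known for a single line, applying these automorphisms and invoking transitivity gives the identity for all seven lines. So it suffices to treat one conveniently chosen line, say \(\{1,5,7\}\).

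For that line I would prove the equivalent statement \(s_1s_5=s_7\) (the two forms agree because the \(s_i\) are commuting involutions, by Lemmas \ref{lem:s-invols} and \ref{lem:s-comms}). The mechanism is the one already in play: expand \(s_1s_5=t_1t_8t_5t_{12}\) and transport symmetric generators past one another with Relation (\ref{rel:delta}), \(s_mt_n=\delta_{m,n}t_{\bar n}s_m\), collapsing the length-five subwords that appear by means of (\ref{rel:alpha}) and (\ref{rel:beta}). The reason this line is convenient is that the packaged permutation \(\beta_{1,12,8}=t_1t_{12}t_8t_5t_1=(xyx^2)^{-1}\) already involves precisely the symmetric generators lying over the Fano points \(1\) and \(5\), so it should be exactly what rewrites \(s_1s_5\) as \(s_7\) once the stray control-group factors are gathered.

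The main obstacle is the bookkeeping at the end. The manipulation produces a string of control-group elements \(\delta_{\ast,\ast},\alpha_{\ast,\ast},\beta_{\ast,\ast,\ast}\in\mathcal{N}\), and the whole assertion comes down to verifying that their accumulated product is the identity permutation. This cancellation is precisely what collinearity buys: for three points in general position the same computation leaves a genuine second symmetric factor and yields only the weaker identity \(s_is_j=s_ks_l\) of Lemma \ref{lem:s-prods}. It is worth flagging that Lemma \ref{lem:s-prods} together with commutativity by themselves give only the even relations \(s_is_js_ks_l=\Id\) on the complements of lines; these leave \(\langle s_i\rangle\) elementary abelian of rank \(4\) (they do not force \(s_1\cdots s_7=\Id\)), so a genuinely new collinear computation really is needed, and checking the permutation cancellation is where the work lies.
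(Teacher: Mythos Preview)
Your reduction to a single line via the transitive action of \(\mathcal{N}\) on the lines of the Fano plane is exactly what the paper does, and you pick the same line \(\{1,5,7\}\).  The difference is in how the single case is handled.

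The paper does \emph{not} try to collapse \(t_1t_8t_5t_{12}\) directly.  Instead it right-multiplies \(s_1s_5s_7\) by an auxiliary \(t_4\), repeatedly applies Relation~(\ref{rel:delta}) to push \(t_4\) (and then \(t_{11}\)) leftwards through the \(s_i\), and obtains
\[
s_1s_5s_7t_4 \;=\; t_{11}\,s_1s_2s_7
\]
after the accumulated \(\delta\)-permutations cancel.  At this point Lemma~\ref{lem:s-prods} is invoked: \(s_1s_2=s_4s_7\), so the right side becomes \(t_{11}s_4 = t_4\), and cancelling \(t_4\) finishes.  So Lemma~\ref{lem:s-prods} is the key step, used in tandem with the \(\delta\)-shuffle, not on its own.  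Your observation that the even relations from Lemma~\ref{lem:s-prods} alone only cut \(\langle s_i\rangle\) down to rank~\(4\) is correct, but it does not rule out using that lemma as an ingredient, which is precisely what the paper does.

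Your plan --- expand \(s_1s_5=t_1t_8t_5t_{12}\) and collapse via \(\alpha,\beta,\delta\) --- is plausible in principle, but as written it is a sketch rather than a proof: you concede that the entire content lies in checking that the accumulated control-group word is trivial, and you have not done that check.  Since \(\beta_{1,12,8}=t_1t_{12}t_8t_5t_1\) has the letters in a different order from \(t_1t_8t_5t_{12}\), you would first need \(\alpha\)-moves to rearrange, and the bookkeeping is not obviously shorter than the paper's route.  If you want to avoid Lemma~\ref{lem:s-prods}, you should actually carry out the word calculation; otherwise, the paper's trick of multiplying by an extra \(t_i\) and then appealing to Lemma~\ref{lem:s-prods} is both shorter and already fully justified.
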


\begin{proof}
	Since \(\mathcal{N}\) acts transitively on the set of lines in the Fano plane,
	it is enough to show \(s_1s_5s_7 = \Id\). We compute
	\begin{align*}
		s_1s_5(s_7t_4)&= s_1s_5(\delta_{4,7}t_{11}s_7) = \delta_{4,7}
		s_3(s_2t_{11})s_7 \\
		&= \delta_{4,7} s_3 \delta_{2,4}t_4s_2s_7 =
		\delta_{4,7}\delta_{2,4}\delta_{1,4}
		t_{11}s_1s_2s_7 \\
		&= t_{11}s_1s_2s_7.
	\end{align*}
	By Lemma \ref{lem:s-prods}, we know \(s_1s_2 = s_4s_7\). Thus
	\[
		s_1s_5s_7t_4 = t_{11}(s_1s_2)s_7 = t_{11}s_4s_7s_7 = t_4
	\]
	and we conclude \(s_1s_5s_7 = \Id\).
\end{proof}

The following corollary follows immediately from Lemma \ref{lem:s-prod-rule} and
Relation (\ref{rel:delta}).

\begin{corollary}
	For \(i,j,k\) collinear in the Fano Plane (Figure
	\ref{fig:fano-plane}), there exists an involution \(\sigma_{i,j}\) such that
	\[
		t_it_jt_k = \sigma_{i,j}t_it_jt_{k+7}.
	\]
	\label{cor:linearity-sigma}
\end{corollary}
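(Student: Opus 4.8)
The plan is to reduce everything to moving a single \(s\)-element across \(t_i t_j\), and to identify the resulting permutation explicitly as \(\delta_{i,j}\). First I would replace \(t_k\) by \(s_k t_{k+7}\): since \(t_{k+7}\) is an involution and \(s_k = t_k t_{k+7}\), we have \(t_k = s_k t_{k+7}\), so that \(t_i t_j t_k = t_i t_j s_k\, t_{k+7}\). It therefore suffices to show \(t_i t_j s_k = \sigma_{i,j}\, t_i t_j\) for some involution \(\sigma_{i,j}\in\mathcal{N}\).

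Next I would use collinearity. By Lemma~\ref{lem:s-prod-rule} we have \(s_i s_j s_k = \Id\), and since the \(s\)-elements are commuting involutions (Lemmas~\ref{lem:s-invols} and~\ref{lem:s-comms}) this gives \(s_k = s_i s_j\). Hence \(t_i t_j s_k = t_i t_j s_i s_j\), and the problem becomes pushing \(s_i\) and then \(s_j\) to the far left through \(t_i t_j\). For this I would use Relation~(\ref{rel:delta}) in the form \(t_m s_i = \delta_{i,m} s_i t_{\bar m}\) (obtained from \(\delta_{i,j}s_i t_j = t_{\bar j}s_i\) by re-indexing \(j\mapsto\bar j\) and using \(\delta_{i,j}=\delta_{i,\bar j}\)), together with the two elementary identities \(t_i \delta_{i,j} = \delta_{i,j} t_{\bar i}\) (because \(\delta_{i,j}\) swaps \(i\) with \(\bar i\)) and \(t_{\bar i} s_i = t_i\) (because \(t_i\) and \(t_{\bar i}\) commute, as \(s_i^2 = \Id\)).

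Carrying this out, \(t_j s_i = \delta_{i,j} s_i t_{\bar j}\) moves \(s_i\) one step left and produces \(\delta_{i,j}\); pulling \(\delta_{i,j}\) past \(t_i\) turns \(t_i\) into \(t_{\bar i}\); the identity \(t_{\bar i}s_i = t_i\) restores \(t_i\) and absorbs \(s_i\); and the analogous identity \(t_{\bar j} s_j = t_j\) absorbs the remaining \(s_j\). The net effect is \(t_i t_j s_i s_j = \delta_{i,j}\, t_i t_j\), so we may take \(\sigma_{i,j} = \delta_{i,j}\), which is an involution by the discussion following Relation~(\ref{rel:delta}). This is also consistent with the notation \(\sigma_{i,j}\): given two Fano points \(i,j\), the third collinear point \(k\) is determined, so no dependence on \(k\) is lost.

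The computation is short, so the only real obstacle is bookkeeping. Each application of the \(\delta\)-relation shifts a subscript by \(7\) and inserts a permutation that itself acts nontrivially on subscripts, so I must check that the successive index shifts cancel and that the permutation factors collapse to the single involution \(\delta_{i,j}\) rather than a longer word. Routing the argument through \(s_k = s_i s_j\), rather than pushing \(s_k\) directly, is exactly what keeps every index shift inside the controlled set \(\{i,\bar i,j,\bar j\}\) on which \(\delta_{i,j}\) has a known action.
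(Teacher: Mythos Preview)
Your argument is correct and is precisely the derivation the paper has in mind: the paper's proof consists of the single sentence that the corollary ``follows immediately from Lemma~\ref{lem:s-prod-rule} and Relation~(\ref{rel:delta}),'' and you have unpacked exactly those two ingredients. As a bonus you identify \(\sigma_{i,j}=\delta_{i,j}\) explicitly, which the paper does not state but which is consistent with how \(\sigma_{i,j}\) is used later (e.g.\ in the proof of Theorem~\ref{thm:relation-gamma}).
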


The multiplication table for the \(s_i\) is given by Figure
\ref{fig:fano-plane}. For example, \(s_7s_2 = s_3\) as these points are
collinear. We conclude

\begin{theorem}
	\(\mathcal{M}\cong 2^3:L_3(2)\). 
	\label{thm:M-iso}
\end{theorem}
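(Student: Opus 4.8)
The plan is to prove the theorem in the two stages suggested by the preceding discussion: first identify the subgroup $\langle s_i\rangle$ with the elementary abelian group $2^3$, and then realize $\mathcal{M}$ as a split extension of this normal subgroup by the control group $\mathcal{N}\cong L_3(2)$. Since $\mathcal{N}$ already sits inside $\mathcal{M}$ and permutes the $s_i$ by conjugation, the only genuine content is pinning down $\langle s_i\rangle$ exactly, so I would concentrate there.

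For the upper bound I would argue purely from the lemmas already in hand. By Lemma \ref{lem:s-invols} the seven elements $s_1,\dots,s_7$ are involutions, by Lemma \ref{lem:s-comms} they commute, and by Lemma \ref{lem:s-prod-rule} any two of them multiply to a third: given distinct points $i,j$ there is a unique Fano line $\{i,j,k\}$ and $s_is_j=s_k$. Hence $\{1,s_1,\dots,s_7\}$ is closed under multiplication, so $\langle s_i\rangle$ is an elementary abelian $2$-group of order at most $8$. Concretely, sending each Fano point to the corresponding nonzero vector of $\mathbb{F}_2^3$, with collinear triples being exactly the triples summing to zero, shows that $s_i\mapsto v_i$ extends to a surjection $2^3\twoheadrightarrow\langle s_i\rangle$, exhibiting $\langle s_i\rangle$ as a quotient of $2^3$.

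The matching lower bound is where I expect the only real obstacle, since the relations derived so far can only force collapse, never prevent it. Here I would invoke Corollary \ref{cor:main-order}: the assignment of Remark \ref{rem:main} defines an epimorphism $\mathcal{G}\twoheadrightarrow M_{22}$ under which each $s_i=t_it_{i+7}$ maps to the corresponding product of involutions in the subgroup $2^3:L_3(2)$ of $M_{22}$. By the construction in Proposition \ref{prop:main} these images generate that copy of $2^3$, hence generate a group of order $8$; consequently $|\langle s_i\rangle|\geq 8$ in $\mathcal{G}$. Combined with the upper bound this forces $|\langle s_i\rangle|=8$, and an elementary abelian group of order $8$ is $2^3$.

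It remains to assemble the extension. Conjugation by $\mathcal{N}$ permutes the seven blocks $\{i,i+7\}$ of Figure \ref{fig:fano-plane}, and hence permutes the generators $s_i$ according to the natural $7$-point action of $L_3(2)$ on the Fano plane; under the identification $\langle s_i\rangle\cong\mathbb{F}_2^3$ this is precisely the defining action of $L_3(2)$ on the seven nonzero vectors of $\mathbb{F}_2^3$, so $\mathcal{N}$ normalizes $\langle s_i\rangle$ and acts faithfully on it. The intersection $\mathcal{N}\cap\langle s_i\rangle$ is a normal subgroup of the simple group $\mathcal{N}\cong L_3(2)$; it cannot equal $\mathcal{N}$ for order reasons, so it is trivial. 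Therefore $\mathcal{M}=\langle s_i\rangle\,\mathcal{N}$ is a split extension with $|\mathcal{M}|=8\cdot 168=1344$, that is, $\mathcal{M}\cong 2^3:L_3(2)$.
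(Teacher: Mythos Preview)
Your proof is correct and follows essentially the same route as the paper: the paper's proof is the single line ``Follows from Lemmas \ref{lem:s-comms} and \ref{lem:s-prod-rule},'' relying on the preceding remark that it suffices to identify $\langle s_i\rangle$ with $2^3$, after which the semidirect product structure is inherited from $\mathcal{G}$.

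The one point where you go beyond the paper is the lower bound $|\langle s_i\rangle|\geq 8$. The paper's cited lemmas only yield the upper bound (closure under multiplication and commutativity), and the paper does not spell out why no further collapse occurs. Your appeal to Corollary~\ref{cor:main-order} and Proposition~\ref{prop:main}---using the epimorphism $\mathcal{G}\twoheadrightarrow M_{22}$ under which $\langle \mathcal{N},s_1\rangle$ maps onto the concrete $2^3:L_3(2)$ inside $M_{22}$---is exactly the right way to close this gap, and is implicitly available from the paper's earlier work even if not invoked in the one-line proof.
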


\begin{proof}
	Follows from Lemmas \ref{lem:s-comms} and \ref{lem:s-prod-rule}.
\end{proof}

\subsection{Two more families of relations.}

The relation \( (xt)^8\) can be written as \(t_7t_1t_2t_3 = xt_7t_6t_5t_4\)
Conjugating by \(\mathcal{N}\) gives us another family of relations:
\begin{equation}
	t_it_jt_kt_l = \epsilon_{ip}t_it_pt_qt_r,
	\label{rel:big}
\end{equation}
where the permutation \(\epsilon_{ip}\) is determined, as indicated, by
\(t_it_p\). This follows since \(t_p\neq t_{i+7}\) and so the two point
stabilizer is trivial. 

We devote the rest of this section to proving the existence of the following
relation
\begin{equation}
	\gamma_{i,j,k} = t_it_jt_kt_it_j,
	\label{rel:gamma}
\end{equation}
where \(i,j,k\) are distinct and collinear on the Fano plane, see Figure
\ref{fig:fano-plane}. We will need a lemma. 

\begin{lemma}
	There exists \(\sigma\in\mathcal{N}\) such that \(t_{14}t_6t_3t_{11} = \sigma
	t_5t_3t_8t_{14}\).
	\label{lem:key-lemma-reln4}
\end{lemma}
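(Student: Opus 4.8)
The plan is to prove the asserted identity by a direct chain of rewrites in the symmetric generators, reducing both length‑four words to a common form using only the relations already assembled in this section, namely (\ref{rel:alpha}), (\ref{rel:beta}), (\ref{rel:delta}), (\ref{rel:big}) and Corollary~\ref{cor:linearity-sigma}. Since \(\sigma\in\mathcal{N}\), the content of the lemma is a coset collapse: the two words \(t_{14}t_6t_3t_{11}\) and \(t_5t_3t_8t_{14}\) lie in a single left \(\mathcal{N}\)-coset, equivalently the length‑eight word \(t_{14}t_6t_3t_{11}t_{14}t_8t_3t_5\) collapses into \(\mathcal{N}\). The guiding observation is geometric. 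Writing each generator over its Fano point (Figure~\ref{fig:fano-plane}), the left word covers the points \(\{7,6,3,4\}\) and the right word the points \(\{5,3,1,7\}\); in both cases the three points other than \(3\) form a line through the vertex \(7\) (the line \(\{7,6,4\}\) on the left and \(\{1,7,5\}\) on the right), and both words share the generators \(t_3\) and \(t_{14}=t_{\bar 7}\).

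Concretely, I would exploit that each word already matches one side of the relation (\ref{rel:big}) coming from \((xt)^8\). The right word \(t_5t_3t_8t_{14}\) has its first, third and fourth generators lying over the collinear triple \(\{5,1,7\}\), which is exactly the pattern of the left‑hand member \(t_it_jt_kt_l\) of (\ref{rel:big}); dually, the left word \(t_{14}t_6t_3t_{11}\) has its first, second and fourth generators over the collinear triple \(\{7,6,4\}\), matching the right‑hand member \(t_it_pt_qt_r\). The first step is therefore to apply the appropriate \(\mathcal{N}\)-conjugate of (\ref{rel:big}) to each word, reflecting it about its leading generator and picking up a control‑group factor \(\epsilon\); this recasts both on a common backbone. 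I would then use Corollary~\ref{cor:linearity-sigma} to reconcile the discrepancies between barred and unbarred generators (toggling the final entry of a collinear triple at the cost of an involution \(\sigma_{i,j}\)), slide individual generators past the remaining \(s_i\) using (\ref{rel:delta}), and finally collapse any residual length‑five palindrome \(t_at_bt_at_bt_a\) or \(t_at_bt_{\bar a}t_{\bar b}t_a\) into \(\mathcal{N}\) via (\ref{rel:alpha}) or (\ref{rel:beta}). Collecting the accumulated factors \(\epsilon\), \(\sigma_{i,j}\) and the various \(\delta_{i,j}\) then yields the element \(\sigma\).

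The main obstacle is that the two words share no leading generator and contain no three consecutive collinear entries, so no single relation carries one to the other; the reflections supplied by (\ref{rel:big}) send the two words to different intermediate backbones, and bridging these requires a carefully chosen interleaving of bar‑toggles from Corollary~\ref{cor:linearity-sigma} with further applications of (\ref{rel:big}). Choosing the conjugating elements of \(\mathcal{N}\) correctly—equivalently, selecting the right collinear triples and reflection axes at each stage—is the crux. Compounding this is the bookkeeping forced by the imprimitivity of the \(14\)-point action: each generator carries a sheet (barred or unbarred), and every rewrite must track the induced sheet flips together with the accumulated control‑group factors, so that at the end the net correction is certified to lie in \(\mathcal{N}\) and not merely in the larger subgroup \(\mathcal{M}\cong 2^3:L_3(2)\). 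Verifying this final membership, and computing \(\sigma\) explicitly should it be needed for the subsequent proof of (\ref{rel:gamma}), is the last checkpoint.
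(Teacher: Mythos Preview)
Your proposal is a strategic outline rather than a proof, and the central computation is never actually carried out. You correctly identify that the content is a left \(\mathcal{N}\)-coset collapse, and your geometric observation about collinear triples in positions \(1,3,4\) (resp.\ \(1,2,4\)) of the two words is apt. But your plan---apply a conjugate of (\ref{rel:big}) to each four-letter word, then bridge---is precisely the step you yourself flag as the ``main obstacle,'' and you give no indication of which conjugates to choose or why the intermediate backbones can in fact be reconciled. As it stands the proposal asserts that a sequence of rewrites exists without exhibiting one.

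The paper's argument is different in kind and worth noting. Rather than working with (\ref{rel:big}) in its four-letter form, it uses the full expansion \(x=t_7t_1t_2t_3t_4t_5t_6t_7\) coming from \((xt)^8=\Id\) to write \(xt_{14}t_6\) as a length-ten word, and then collapses that word systematically by repeatedly spotting substrings of the form \(t_jt_it_{\bar i}=t_js_i\) and sliding the resulting \(s_i\) to the right via (\ref{rel:delta}). This produces an explicit short word for \(xt_{14}t_6\); right-multiplication by \(t_3t_{11}\) and then by \(t_{14}t_8\), with the same collapse trick, yields an explicit element of \(\mathcal{N}\) times \(t_5t_3\), from which the lemma follows. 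The key idea you are missing is this ``lengthen first, then collapse through \(s_i\)'' manoeuvre: by passing through a long word one creates the adjacent pairs \(t_it_{\bar i}\) needed to invoke (\ref{rel:delta}), and the accumulated control-group factors are tracked exactly at each step, so there is no endgame worry about \(\mathcal{N}\) versus \(\mathcal{M}\). Your two-sided reflection strategy never generates such pairs, which is why the bridging step resists you.
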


\begin{proof}
	We use Relation (\ref{rel:big}):
	\begin{align*}
		xt_{14}t_6 &= t_7t_1t_2t_3t_4t_5(t_6t_7
		t_{14})t_6 \\
		&=t_7t_1t_2t_3t_4t_5(\delta_{6,7}s_7s_6) \\
		&=\delta_{6,7}t_{14}t_9t_8t_5t_{11}(t_3s_4) \\
		&=\delta_{6,7}t_{14}t_9t_8t_5t_{11}(\delta_{3,4}s_4t_{10}) \\
		&=\delta_{6,7}\delta_{3,4}t_9t_{14}t_1t_{13}(t_4s_4)t_{10} \\
		&=\delta_{6,7}\delta_{3,4}t_9t_{14}t_1t_{13}t_{11}t_{10} \\
	\end{align*}
	and multiply both sides by \(t_3t_{11}\):
	\begin{align*}
		xt_{14}t_6t_3t_{11} &= \delta_{6,7}\delta_{3,4}
		t_9t_{14}t_1t_{13}(t_{11}t_{10}t_3)t_{11} \\
		&= \delta_{6,7}\delta_{3,4}
		t_9t_{14}t_1t_{13}(\delta_{3,4}s_3t_4)t_{11} \\
		&=\delta_{6,7} t_5t_{13}t_8(t_{14}s_1) \\
		&=\delta_{6,7} t_5t_{13}t_8\delta_{1,7}s_1t_7 \\
		&=\delta_{6,7}\delta_{1,7} t_{12}t_{11}t_8t_7 \\
	\end{align*}
	and now multiplying both sides by \(t_{14}t_8\):
	\begin{align*}
		xt_{14}t_6t_3t_{11}t_{14}t_8 &= \delta_{6,7}\delta_{1,7}
		t_{12}t_{11}(t_8t_7t_{14})t_8\\
		&= \delta_{6,7}\delta_{1,7}t_{12}t_{11}(\delta_{1,7}s_7s_1) \\
		&= \delta_{6,7} t_5(t_{10}s_5) \\
		&= \delta_{6,7} t_5(\delta_{3,5}s_5t_3) \\
		&=\delta_{6,7}\delta_{3,5} t_{12}s_5t_3 \\
		&= \delta_{6,7}\delta_{3,5}t_5t_3
	\end{align*}
	and this completes the proof.
\end{proof}

\begin{theorem}
	Relation (\ref{rel:gamma}) holds.
	\label{thm:relation-gamma}
\end{theorem}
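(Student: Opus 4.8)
The plan is to prove a single representative instance of Relation~(\ref{rel:gamma}) directly and then recover the whole family by conjugation. Concretely, I would establish \(t_{14}t_6t_4t_{14}t_6 = \gamma_{14,6,4}\in\mathcal{N}\) for the collinear triple \(\{14,6,4\}\) (whose Fano vertices \(7,6,4\) form the line through \(A\) and \(B\)), and then argue that every other instance follows. For this, note that conjugation by \(g\in\mathcal{N}\) carries \(t_it_jt_kt_it_j\) to \(t_{gi}t_{gj}t_{gk}t_{gi}t_{gj}\) while normalising \(\mathcal{N}\); since \(L_3(2)\) is transitive on lines and induces the full symmetric group on the three points of each line, conjugation reaches every ordered collinear triple of underlying vertices. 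The residual freedom of replacing any index \(m\) by its partner \(\bar m\) over the same vertex is absorbed using \(s_m=s_{\bar m}\) (Lemma~\ref{lem:s-invols}), Corollary~\ref{cor:linearity-sigma}, and Relation~(\ref{rel:delta}), each of which only contributes a factor lying in \(\mathcal{N}\).

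For the representative I would begin with \(\gamma_{14,6,4}=t_{14}t_6t_4t_{14}t_6\) and apply Corollary~\ref{cor:linearity-sigma} to the collinear head \(t_{14}t_6t_4\), rewriting it as \(\sigma_{14,6}\,t_{14}t_6t_{11}\), so that \(\gamma_{14,6,4}=\sigma_{14,6}\,t_{14}t_6t_{11}t_{14}t_6\). The objective is then to expose the four-letter block \(t_{14}t_6t_3t_{11}\) governed by Lemma~\ref{lem:key-lemma-reln4}: one inserts a cancelling pair \(t_3t_3\) and transports one of the \(t_3\)'s across the adjacent \(s\)-type products via Relation~(\ref{rel:delta}), converting the tail into an \(\mathcal{N}\)-multiple of \(t_{14}t_6t_3t_{11}\) followed by a short remainder. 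Applying Lemma~\ref{lem:key-lemma-reln4} flips this block to \(\sigma\,t_5t_3t_8t_{14}\); crucially, in the resulting word two symmetric generators now sit over a common Fano vertex in bracketable position.

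From here the collapse is driven entirely by the \(s\)-arithmetic already in place: each pair of generators over a common vertex is an \(s_m\) (equal to its own inverse), and Lemmas~\ref{lem:s-comms}--\ref{lem:s-prod-rule} allow these \(s_m\) to be commuted, combined along lines via \(s_is_js_k=\Id\), and finally cancelled, with each step depositing only \(\delta\)-, \(\sigma\)-, or \(\alpha\)-factors, all of which lie in \(\mathcal{N}\). Collecting the control-group factors accumulated along the way gives \(\gamma_{14,6,4}\in\mathcal{N}\), which is Relation~(\ref{rel:gamma}) for this triple. I expect the main obstacle to be the bookkeeping in the middle step: the cancelling generators must be inserted and moved so that, after the substitution from Lemma~\ref{lem:key-lemma-reln4}, the surviving symmetric generators pair off over common vertices \emph{exactly}, leaving no stray length-one or length-two remainder that resists collapse. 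Lemma~\ref{lem:key-lemma-reln4} is precisely the non-obvious four-letter identity tuned to make this pairing occur, so the genuine content is choosing the insertion points correctly; everything after the substitution is forced by the Fano-plane multiplication table for the \(s_i\).
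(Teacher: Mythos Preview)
Your high-level plan---prove one instance and then conjugate, using the transitivity of \(L_3(2)\) on ordered collinear triples together with Corollary~\ref{cor:linearity-sigma} to handle the \(m\leftrightarrow\bar m\) ambiguity---is exactly what the paper does. The difference is in how the single instance is obtained, and there your middle step does not go through as written.

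After applying Corollary~\ref{cor:linearity-sigma} you are looking at \(t_{14}t_6t_{11}t_{14}t_6\). You propose to insert \(t_3t_3\) and ``transport one of the \(t_3\)'s across the adjacent \(s\)-type products via Relation~(\ref{rel:delta})'' so as to expose the block \(t_{14}t_6t_3t_{11}\). But Relation~(\ref{rel:delta}) only moves a generator past an \(s_i=t_it_{\bar i}\); in \(t_{14}t_6t_{11}t_{14}t_6\) no two adjacent letters lie over the same Fano vertex, so there is no \(s\)-product available at the insertion point, and an inserted \(t_3\) has nothing to slide past. You even flag this as ``the main obstacle'' and say it amounts to ``choosing the insertion points correctly,'' but you never exhibit a choice that works. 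As it stands, the lemma is never actually invoked, and the word does not collapse.

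The paper avoids this difficulty by running the argument in the opposite direction: it starts from (a conjugate of) Lemma~\ref{lem:key-lemma-reln4}, left-multiplies by a single generator chosen so that, after commuting past the \(\mathcal{N}\)-factor, it cancels the leading letter on the right-hand side, and then uses Relation~(\ref{rel:alpha}) to shorten the left-hand side. Two such steps reduce the identity to the form \(t_{14}t_8t_5=\pi\,t_8t_{14}\) with \(\pi\in\mathcal{N}\); one application of Corollary~\ref{cor:linearity-sigma} turns the left side into \(t_{14}t_8t_{12}\), giving \(\gamma_{14,8,12}\in\mathcal{N}\) directly. No cancelling pairs need to be inserted and no \(\delta\)-moves are required. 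If you want to salvage your direction of attack, you would need a concrete sequence of \(\alpha\)-, \(\beta\)-, or \(\epsilon\)-moves (not \(\delta\)) that rearranges \(t_{14}t_6t_{11}t_{14}t_6\) into the lemma's block times a tail that genuinely pairs off; absent that, the forward computation from the lemma is both shorter and complete.
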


\begin{proof}
	By Lemma \ref{lem:key-lemma-reln4}, we have:
	\begin{align*}
		t_{14}t_9t_8t_5 & = \sigma t_5t_3t_8t_{14} \\
		t_9t_{14}t_9t_8t_5 &= \sigma t_3t_8t_{14} \\
		\alpha_{9,14}t_9t_{14}t_8t_5 &= \sigma t_3t_8t_{14} \\
		t_9t_{14}t_8t_5 &=\alpha_{9,14}\sigma t_3t_8t_{14} \\
		t_{14}t_8t_5 &= \alpha_{9,14}\sigma t_8t_{14}.
	\end{align*}
	Since \(t_{14}t_8t_5 = \sigma_{14,8}t_{14}t_8t_{12}\), relation
	\ref{rel:gamma} now follows by conjugation.
\end{proof}

\section{Double Coset Enumeration Over \(\mathcal{M}\)}
\label{sec:DCE}

Double coset enumeration is the main tool for working with symmetric
presentations of finite groups. An introduction can be found in \cite{Curt2},
see also the papers \cite{BC2}, \cite{BC3}, \cite{fair-coxeter}. The primary
purpose of double coset enumeration, at least in this paper, will be to find an
upper bound on the size of \(\mathcal{G}\). A secondary, but not always useful,
purpose is that the enumeration process also furnishes a (collapsed) Cayley
Graph on which our group acts. This graph sometimes contains useful information
about the group, e.g. how the symmetric generators act on the cosets. In this
paper, we use the Cayley Graph will be used to show \(\mathcal{G}\) is simple.

One typically enumerates the double cosets of the form
\(\mathcal{N}\omega\mathcal{N}\), where \(\omega\) a word in the \(t_i\)s. This
paper will enumerate double cosets of the form \(\mathcal{M}\omega\mathcal{N}\).
Although \(\mathcal{M}\) is only eight times larger than \(\mathcal{N}\), the
work becomes substantially easier by replacing \(\mathcal{N}\) with
\(\mathcal{M}\) in the enumeration. Using a larger group is not uncommon, see
\cite{bray-curt-10} and \cite{wied}.

Recall, for two words \(\omega_1\) and \(\omega_2\) in the \(t_i\)s, we define
\(\omega_1\sim \omega_2\) if and only if \(\mathcal{M}\omega_1 =
\mathcal{M}\omega_2\). Denote by \([\omega]\) the double coset
\(\mathcal{M}\omega\mathcal{N}\), i.e. the orbit of \(\mathcal{M}\omega\) under
the right action of \(\mathcal{N}\). For example, we write \([t_7] =
\mathcal{M}t_7\mathcal{N}\).

Unfortunately, this subject is naturally computationally intensive. We will use
relations frequently and it can get very confusing. With this in mind, we
subscript the \(\sim\) symbol when using relations. This seems to be the
simplest way to be clear about when we are using which relation without
further cluttering the computation.

For example
\[
	t_7t_1t_8\sim_{\ref{rel:delta}} t_1t_8t_{14}=s_1t_{14}\sim t_{14}.
\]
Here we have used Relation (\ref{rel:delta}). The last \(\sim\) follows by the
definition of \(\mathcal{M}\). We will freely absorb the elements \(s_i\) into
\(\mathcal{M}\) without mention.

\subsection{Cosets of length 1 and 2.}
\label{ssec:cosets1-2}

Let \([\star]\) denote the coset \(\mathcal{M}\). Since \(\mathcal{N}\) is
transitive, there is a single orbit. We choose 7 as the orbit representative
giving us a new double coset \([t_7]\).

\subsubsection{\([t_7]\)}
\label{sssec:7}

Since \(s_7=t_7t_{14}\in \mathcal{M}\), we have \(t_7\sim t_{14}\) and so 
\begin{equation}
	\mathcal{N}^{(7)}\geq\langle \mathcal{N}^7,(2, 13)(3, 4)(5, 12)(6, 9)
	(7, 14)(10, 11)\rangle.
	\label{rel:stab7}
\end{equation}

The two orbits of \(N^{(7)}\) are \(\{7,14\}\) and \(\{1,...,6,8,...,13\}\). We
shoose the orbit representatives 7 and 1.

The \(\{7,14\}\) orbit goes back and the remaining orbit gives a new double
coset \([t_7t_1]\). 

\subsubsection{\([t_7t_1]\)}
\label{sssec:71}

Using Relation (\ref{rel:delta}) we have
\[
	t_7t_1t_8\sim_{\ref{rel:delta}} \delta_{1,8,14}s_8t_{14} \sim t_{14} \sim t_7.
\]
Thus \(t_7t_1\sim t_7t_8\) and we already have \(t_7t_1\sim t_{14}t_1\) so the
coset stabilizer expands
\begin{align}
	\label{rel:stab71}
	N^{(7,1)}&\geq\langle (2, 13)(3, 4)(5, 12)(6, 9)(7, 14)(10, 11), \\
	&(1, 8)(2, 10)(3, 9)(4, 6)(5, 12)(11, 13)\rangle.\nonumber
\end{align}

Since \(|\mathcal{N}^{(7,1)}|\geq 4\), there are at most \(42\) cosets of length
2. The 5 orbits of \(\mathcal{N}^{(71)}\) are \(\{\{ 1, 8 \},\ \{ 5, 12 \},\ \{
7, 14 \},\ \{ 2, 10, 11, 13 \},\ \{ 3, 4, 6, 9 \}\}\). We choose the orbit
representatives 1, 2, 3, 7, 12.

The \(\{1,8\}\) orbit evidently goes back. Relation (\ref{rel:alpha}) gives
\(t_7t_1t_7\sim_{\ref{rel:alpha}} t_7t_1\in [t_7t_1]\). Relation
(\ref{rel:gamma}) gives \(t_7t_1t_{12}\sim_{\ref{rel:gamma}} t_1t_7\in [t_7t_1]\).
The remaining two orbits give the two new double cosets \([t_7t_1t_2]\) and
\([t_7t_1t_3]\).

\subsection{Cosets of length 3.}
\label{ssec:cosets-3}
We consider the extensions of \(\mathcal{M}t_7t_1t_2\) and \(\mathcal{M}t_7t_1t_3\).

\subsubsection{\([t_7t_1t_2]\):}
\label{sssec:712}

In \([t_7t_1t_2]\), we have \(t_7t_1t_2\sim t_5t_1t_9\). Indeed, using Relation
(\ref{rel:delta}):
\[
	t_7(t_1s_2)\sim_{\ref{rel:delta}} (t_7\delta_{1,2})s_2t_8 \sim (t_5s_2)t_8
	\sim_{\ref{rel:delta}} t_{12}t_1 \sim_{\ref{rel:stab7}} t_5t_1.
\]
Thus the coset stabilizer is nontrivial:
\begin{align}
	\mathcal{N}^{(7,1,2)}&\geq\langle (2,9)(3, 11)(4, 10)(5, 7)(6, 13)(12, 14)\rangle.
	\label{rel:stab712}
\end{align}

Since \(|\mathcal{N}^{(7,1,2)}|\geq 2\), the number of elements in \([t_7t_1t_2]\)
is at most 84. The orbits of \(\mathcal{N}^{(7,1,2)}\) are: \(\{\{ 1 \},\{ 8 \},
\{ 2, 9 \}, \{ 3, 11 \}, \{ 4, 10 \}, \{ 5, 7 \}, \{ 6, 13 \}, \{ 12, 14\}\}\).
We use the following orbit representatives: 1, 2, 3, 4, 5, 6, 8, 12:

\begin{enumerate}
	\item[\(t_1\):] Using Relation (\ref{rel:alpha}) we have:
		\[
			t_7t_1t_2t_1\sim_{\ref{rel:alpha}} t_7\alpha_{1,2}t_1t_2\sim
			t_{11}t_1t_2\sim_{\ref{rel:stab7}} t_4t_1t_2\in [t_7t_1t_3].
		\]
	\item[\(t_2\):] It goes back: \(t_7t_1t_2t_2 = t_7t_1\in [t_7t_1]\).
	\item[\(t_5\):] We claim \(t_7t_1t_2t_5\sim t_8t_5t_{11}t_3\in
		[t_7t_1t_2t_3]\). We compute
		\begin{align*}
			(t_7t_1t_2)t_5t_3t_{11}t_5t_8 &\sim_{\ref{rel:big}}
			t_7t_6t_5t_4(t_3t_5t_3)t_{11}t_5t_8 \\
			&\sim_{\ref{rel:alpha}} t_7t_6t_5t_4\alpha_{3,5}t_3t_5t_{11}t_5t_8 \\
			&\sim t_{14}t_6(t_3t_{11}t_3)t_5t_{11}t_5t_8 \\
			&\sim_{\ref{rel:alpha}} t_{14}t_6\alpha_{3,11}t_3t_{11}t_5t_{11}t_5t_8  \\
			&\sim_{\ref{rel:alpha}} t_{12}t_{13}t_3(t_{11}t_5t_{11}t_5)t_8 \\
			&\sim t_{12}t_{13}t_3\alpha_{5,11}t_{11}t_8 \\
			&\sim t_4(t_8t_{10}t_{11})t_8 \\
			&\sim_{\ref{rel:gamma}} t_4\gamma_{8,10,11}t_{10} \\
			&\sim t_3t_{10}\sim \star.
		\end{align*}
		Multiplying on the right by \(t_8t_5t_{11}t_3\) completes the claim.
	\item[\(t_6\):] Using Relation (\ref{rel:gamma}) we have: 
		\[
			t_7(t_1t_2t_6)\sim_{\ref{rel:gamma}} t_7\gamma_{1,2,6} t_2t_1\sim
			t_4t_2t_1 \sim t_{11}t_2t_1\in [t_7t_1t_2].
		\]
	\item[\(t_8\):] Using Relation (\ref{rel:beta}) we have: 
		\[
			t_7(t_1t_2t_8)\sim_{\ref{rel:beta}} t_7\beta_{1,2,8}t_1t_9 \sim
			t_{12}t_1t_9\sim_{\ref{rel:stab7}} t_5t_1t_9\in [t_7t_1t_2].
		\]
	\item[\(t_{12}\):] We show \(t_7t_1t_2t_{12}\in [t_7t_1t_3t_7]\). When we discuss
		\([t_7t_1t_2t_3]\) in
		Section \ref{sssec:7123}, we show \([t_7t_1t_3t_7]=[t_7t_1t_2t_3]\). In Section
		\ref{sssec:713}, we show \(t_7t_1t_3\sim t_{12}t_1t_{10}\). Hence,
		\(t_{12}t_1t_{10}t_7\in [t_7t_1t_2t_3]\) and so it is enough to prove
		\(t_7t_1t_2t_{12}\in[t_{12}t_1t_{10}t_7]\). We compute:
		\begin{align*}
			(t_7t_1t_2)t_{12} &\sim_{\ref{rel:big}} (t_7t_6)t_5t_4t_3t_{12}
			\sim_{\ref{rel:gamma}} t_6t_7(t_4t_5t_4)t_3t_{12} \\
			&\sim_{\ref{rel:alpha}} t_6t_7\alpha_{4,5}t_4(t_5t_3t_{12}) \sim
			t_{13}t_{10}t_4\beta_{5,3,12}t_5t_{10} \\
			&\sim t_6(t_5t_8t_5)t_{10} 
			\sim_{\ref{rel:alpha}}t_6\alpha_{5,8}t_5t_8t_{10} \\
			&\sim t_{13}t_5t_8t_{10}\in [t_{12}t_1t_{10}t_7].
		\end{align*}
\end{enumerate}

The remaining orbits yield two new double cosets \([t_7t_1t_2t_3]\) and
\([t_7t_1t_2t_4]\).

\subsubsection{\([t_7t_1t_3]\)}
\label{sssec:713}

The proof of the relation \(t_7t_1t_3\sim t_{12}t_1t_{10}\) is analgous to the relation
\(t_7t_1t_2\sim t_5t_1t_9\) proved in Section 3.2.1. We conclude
\begin{align}
	\label{rel:stab713}
	\mathcal{N}^{(7,1,3)}&\geq\langle (2, 6)(3, 10)(4, 11)(5, 14)(7, 12)(9, 13)\rangle.
\end{align}

Since \(|\mathcal{N}^{(7,1,3)}|\geq 2\), the number of elements in \([t_7t_1t_3]\) is at most
84.  

The orbits of \(\mathcal{N}^{(7,1,3)}\) are: \( \{1\}, \{8\}, \{2,6\}, \{3,10\}, \{4,11\},
\{5,14\}, \{7,12\}, \{9,13\}\). We use the orbit representatives: 1, 2, 3, 4, 5,
7, 8, 9. 

\begin{enumerate}
	\item[\(t_1\):] Using Relation (\ref{rel:alpha}) we have: 
		\[ 
			t_7(t_1t_3t_1)\sim_{\ref{rel:alpha}} t_7 \alpha_{1,3}t_1t_3 \sim
			t_{13}t_1t_3\sim_{\ref{rel:stab7}} t_6t_1t_3\in[t_7t_1t_3].
		\]
	\item[\(t_2\):] We have
		\begin{align*}
			t_7t_1t_3t_2 &\sim_{\ref{rel:stab7}} (t_{14}t_1)t_3t_2 
			\sim_{\ref{rel:beta},\ref{rel:stab7}} t_{14}t_1(t_7t_3t_2) \\
			&\sim_{\ref{rel:gamma}} t_{14}t_1 \gamma_{7,3,2}t_3t_7 
			\sim t_9t_6t_3t_7 \\
			&\sim_{\ref{rel:stab7}} t_2t_6t_3t_7\in [t_7t_1t_2t_3].
		\end{align*}
	\item[\(t_3\):] It goes back: \(t_7t_1t_3t_3 = t_7t_1\in [t_7t_1]\).
	\item[\(t_4\):] We have
		\begin{align*}
			(t_7t_1t_3)t_4 &\sim_{\ref{rel:stab713},\ref{rel:stab7}}
			t_5(t_1t_{10}t_4)\sim_{\ref{rel:gamma}} t_5\gamma_{1,10,4}t_{10}t_1 \\
			&\sim t_9t_{10}t_1 \in [t_7t_1t_3].
		\end{align*}
	\item[\(t_5\):] We show \(t_7t_1t_3t_5\sim t_2t_4t_6t_1\in [t_7t_1t_2t_3]\). By conjugating 
		relation (\ref{rel:big}) we get:
		\begin{equation}
			\label{rel:big1}
			\sigma t_1t_3t_5t_7 = t_1t_6t_4t_2
		\end{equation}
		where \(\sigma =(1,6,4,2,7,5,3)(8,13,11,9,13,12,10)\). Hence,
		\begin{align*}
			t_7t_1t_3t_5(t_1t_6t_4t_2) &\sim_{\ref{rel:big1}} t_7t_1t_3t_5 \sigma
			t_1t_3t_5t_7 \\
			&\sim t_5t_6(t_1t_3t_1t_3)t_5t_7 \\ 
			&\sim_{\ref{rel:alpha}} t_5t_6\alpha_{1,3}t_1t_5t_7 \\
			&\sim (t_{12}t_{14})t_1t_5t_7 \\
			&\sim_{\ref{rel:stab71}} (t_5t_7t_1t_5t_7)\\
			&\sim_{\ref{rel:gamma}} \star.
		\end{align*}

	\item[\(t_7\):] We compute 
		\begin{align*}
			(t_7t_1)t_3t_7&\sim_{\ref{rel:beta}} t_7t_8t_{14}(t_3t_7)
			\sim_{\ref{rel:beta}} t_7t_8\beta_{14,3,7}t_{14}t_{10} \\
			&\sim (t_{10}t_{11})t_{14}t_{10}
			\sim_{\ref{rel:alpha}} t_{10}t_{11}(t_{10}t_{14}t_{10}) \\
			&\sim_{\ref{rel:alpha}} t_{10}t_{11}\alpha_{14,10}t_{10}t_{14}
			\sim (t_{14}t_4t_{10})t_{14} \\
			&\sim_{\ref{rel:stab712}} t_{12}t_{10}t_4t_{14}.
		\end{align*}
		The last \(\sim\) follows as \( t_{14}t_4t_{10}\in[t_7t_1t_2]\) and so we
		can use (\ref{rel:stab712}) to see \(t_{14}t_4t_{10}\sim
		t_{12}t_{10}t_{14}\). Finally, we have:
		\begin{align*}
			t_{12}t_{10}(t_4s_7)&\sim_{\ref{rel:delta}} t_{12}t_{10}
			\delta_{4,14,7}s_7t_{11}
			\sim t_9(t_1s_7)t_{11} \\
			&\sim_{\ref{rel:delta}} t_{12}\delta_{1,7}s_7t_8t_{11}
			\sim (t_6s_7)t_8t_{11} \\
			&\sim_{\ref{rel:delta}} (t_{13}t_8)t_{11} 
			\sim_{\ref{rel:stab71}} t_{13}t_1t_{11}\in[t_7t_1t_2].
		\end{align*}
		Hence, \(t_7t_1t_3t_7\sim t_{12}t_{10}t_4t_{14} \sim t_{13}t_1t_{11}t_7\in
		[t_7t_1t_2t_3]\) as desired.
	\item[\(t_8\):] Lastly, we compute: 
		\[
			(t_7t_1)t_3t_8\sim_{\ref{rel:stab71}} t_7(t_8t_3t_8)\sim_{\ref{rel:alpha}}
			t_7\alpha_{8,3}t_8t_3\sim t_7t_1t_3\in[t_7t_1t_3].
		\]

\end{enumerate}

The remaining orbit gives a new double coset \([t_7t_1t_3t_9]\).

\subsection{Cosets of length 4.}
\label{ssec:cosets-4}

We consider the extensions of \(\mathcal{M}t_7t_1t_2t_3\),
\(\mathcal{M}t_7t_1t_2t_4, \mathcal{M} t_7 t_1 t_3 t_9\).

\subsubsection{\([t_7t_1t_2t_3]\):}
\label{sssec:7123}

We have the relation \(t_7t_1t_2t_3 \sim t_{14}t_6t_2t_{10}\). This is proved
using the same technique as the relation \(t_7t_1t_2 \sim t_5t_1t_9\) from
Section \ref{sssec:712} and so is omitted. The relation expands the coset stabilizer

\begin{align}
	\mathcal{N}^{(7,1,2,3)}&\geq\langle (1, 6)(3, 10)(4, 12)(5, 11)(7, 14)(8, 13)\rangle.
	\label{rel:stab7123}
\end{align}

Since \(|\mathcal{N}^{(7,1,2,3)}|\geq 2\), the number of elements in \([t_7t_1t_2t_3]\)
is at most 84. The orbits of \(\mathcal{N}^{(7,1,2,3)}\) are: \(\{2\}, \{9\},
\{1,6\}, \{3,10\}, \{4,12\}, \{5,11\}, \{7,14\}, \{8,13\}\). We choose the orbit
representatives: 2, 3, 4, 5, 6, 7, 8, 9:

\begin{enumerate}
	\item[\(t_2\):]	Using Relation (\ref{rel:alpha}) we have: 
		\[
			t_7t_1(t_2t_3t_2) \sim_{\ref{rel:alpha}} t_7t_1\alpha_{2,3} t_2t_3 \sim
			t_7t_{12}t_2t_3.
		\]
		Multiplying on the right by \(t_7\) gives
		\begin{align*}
			(t_7t_1t_2t_3t_2)t_7 \sim t_7t_{12}(t_2t_3t_7)\sim_{\ref{rel:gamma}}
			t_7t_{12}\gamma_{2,3,7}t_3t_2\sim (t_3t_{13}t_3)t_2\sim_{\ref{rel:alpha}}
			t_3t_{13}t_2.
		\end{align*}
		Hence,
		\[
			t_7t_1t_2t_3t_2\sim t_3t_{13}t_2t_7\in[t_7t_1t_2t_3].
		\]
	\item[\(t_3\):] It goes back: \(t_7t_1t_2t_3t_3 = t_7t_1t_2\in[t_7t_1t_2]\).
	\item[\(t_4\):] Using Relation (\ref{rel:big}): 
		\[
			(t_7t_1t_2t_3t_4)\sim_{\ref{rel:big}}
			t_7t_6t_5\sim_{\ref{rel:stab71}} t_7t_{13}t_{15} \in [t_7t_1t_2].
		\]
	\item[\(t_5\):] Using Relation (\ref{rel:big}): 
		\[
			(t_7t_1t_2t_3)t_5\sim_{\ref{rel:big}} t_7t_6(t_5t_4t_5)
			\sim_{\ref{rel:alpha}} t_7t_6\alpha_{5,4}t_5t_4 \sim t_{10}t_{13}t_5t_4.
		\]
		Now if we multiply on the right by \(t_4t_3t_6\) we get
		\begin{align*}
			(t_7t_1t_2t_3t_5)t_4t_3t_6 &\sim t_{10}t_{13}t_5t_4t_4t_3t_6 
			\sim t_{10}t_{13}(t_5t_3t_6)\\
			&\sim_{\ref{rel:gamma}} t_{10}t_{13}\gamma_{5,3,6}t_3t_5
			\sim t_{12}t_{10}t_3t_5 \\
			&\sim t_{12} s_3 t_5\sim_{\ref{rel:beta}} \star.
		\end{align*}
		Hence, \(t_7t_1t_2t_3t_5\sim t_6t_3t_4\in[t_7t_1t_3]\).
	\item[\(t_6\):] We have just seen that \(t_7t_1t_2t_3t_5\sim t_6t_3t_4\) and
		so \(t_7t_1t_2t_3t_6\sim t_6t_3t_4t_5t_6\). By Relation (\ref{rel:big})
		\(t_3t_4t_5t_6 = xt_3t_2t_1t_7\) which gives
		\(t_6(t_3t_4t_5t_6)\sim t_6xt_3t_2t_1t_7\sim t_7t_3t_2t_1t_7\). Now we have:
		\begin{align*}
			t_7t_1t_2t_3t_6&\sim (t_7t_3t_2)t_1t_7\sim_{\ref{rel:gamma}}
			t_3(t_7t_1t_7)\sim_{\ref{rel:alpha}} t_3\alpha_{7,1}t_7t_1 \\
			&\sim t_{13}t_7t_1\sim_{\ref{rel:stab7}} t_6t_7t_1\in [t_7t_1t_2].
		\end{align*}
	\item[\(t_7\):]	We compute:
		\begin{align*}
			(t_7t_1t_2t_3)t_7 &\sim_{\ref{rel:stab7123}} t_{14}t_6(t_2t_{10}t_7)
			\sim_{\ref{rel:beta}} t_{14}t_6 \beta_{2,10,7} t_{10}t_2 \\
			&\sim t_3t_5t_{10}t_2
			\sim t_{12}(t_5t_3t_5)t_{10}t_2 \\
			&\sim_{\ref{rel:alpha}} t_{12} \alpha_{5,3}5(t_3t_{10})t_2 
			\sim t_{10}(t_5s_3)t_2 \\
			&\sim_{\ref{rel:delta}} t_{10}\delta_{5,3}s_3t_{12}t_2
			\sim (t_3s_3)t_{12}t_2 \\
			&\sim_{\ref{rel:delta}} t_{10}t_{12}t_2 
			\sim_{\ref{rel:stab71}} t_3t_5t_2\in [t_7t_1t_3]
		\end{align*}
	\item[\(t_8\):] We proved \(t_7t_1t_2t_3t_6\sim t_6t_7t_1\) above. Hence,
		\begin{align*}
			t_7t_1t_2t_3t_8&\sim (t_7t_1t_2t_3t_6)t_6t_8 
			\sim t_6t_7(t_1t_6t_8)\\
			&\sim_{\ref{rel:beta}} t_6t_7\beta_{1,6,8}t_1t_{13}
			\sim_{\ref{rel:beta}} (t_8t_{11}t_1)t_{13} \\
			&\sim t_8t_4t_{13}\in[7,1,3].
		\end{align*}
	\item[\(t_9\):] Using Relation (\ref{rel:beta}): 
		\begin{align*}
			t_7t_1(t_2t_3t_9)&\sim_{\ref{rel:beta}} t_7t_1\beta_{2,3,9}t_2t_{10} \sim
			t_{14}t_{13}t_2t_{10} \\
			&\sim_{\ref{rel:stab71}} t_{14}t_6t_2t_{10} \in
			[t_7t_1t_2t_3].
		\end{align*}
\end{enumerate}

This completes the coset \([t_7t_1t_2t_3]\). There are no new cosets.

\subsubsection{\([t_7t_1t_3t_9]\)}
\label{sssec:7139}

We first prove \(t_7t_1t_3t_9\sim t_{14}t_{11}t_3t_2\):
\begin{align*}
	t_7t_1(t_3s_2)t_3&\sim_{\ref{rel:delta}} t_7t_1\delta_{3,9}s_2s_3 \sim
	t_{14}(t_{11} s_7) \\
	&\sim_{\ref{rel:delta}} t_{14}\delta_{11,7}s_7t_4\sim t_7s_7t_4\\
	&\sim t_{14}t_4\sim_{\ref{rel:stab7}} t_{14}t_{11}.
\end{align*}
This adds the permutation
\[
	(1, 11)(2, 9)(4, 8)(5, 6)(7, 14)(12, 13)
\]
to the coset stabilizer.

We also have the relation \(t_7t_1t_3t_9\sim t_{9}t_{12}t_{10}t_1\). Indeed, we compute:
\begin{align*}
	(t_7t_1)t_3t_9 &\sim_{\ref{rel:big}} t_7t_6t_5t_4(t_3t_2t_3)t_9 \sim_{\ref{rel:alpha}}
	t_7t_6t_5t_4\alpha_{2,3}(t_3t_2t_9) \\
	&\sim_{\ref{rel:gamma}} t_7t_{13}\gamma_{8,11,3} t_{11}t_8s_2 \sim
	t_2t_{13}t_{11}t_8s_2. \\
	&\sim_{\ref{rel:delta}} t_2t_{13}t_{11}\delta_{8,2}s_2t_1 \sim
	t_9t_6(t_3s_2)t_1 \\
	&\sim_{\ref{rel:delta}} t_9t_6\delta_{3,2}s_2t_{10}t_1 \sim
	t_2(t_5s_2)t_{10}t_1\\
	&\sim_{\ref{rel:delta}} t_2\delta_{5,2}s_2t_{12}t_{10}t_1 \\
	&\sim t_9s_2t_{12}t_{10}t_1 \sim t_9t_{12}t_{10}t_1.
\end{align*}
But we already have \(t_7t_1t_3t_9\sim t_{12}t_1t_{10}t_9\) as \(t_7t_1t_3\sim
t_{12}t_1t_{10}\) and so we
add
\[
	(1,12,9)(2,8,5)(4,13,14)(6,7,11)
\]
to the coset stabilizer.

We conclude
\begin{align}
	\label{rel:stab7139}
	\mathcal{N}^{(7,1,3,9)}&\geq \langle (1, 11)(2, 9)(4, 8)(5, 6)(7, 14)(12, 13), \\
	&(1,12,9)(2,8,5)(4,13,14)(6,7,11)\rangle.\nonumber
\end{align}

Since \(|\mathcal{N}^{(7,1,3,9)}|\geq 12\), the number of cosets in
\([t_7t_1t_3t_9]\) is at most 14. The orbits of \(N^{(7,1,3,9)}\) are:
\(\{\{3\},\{10\},\{1,2,4,...,9,11,..., 14\}\}\). We choose the orbit
representatives: 3, 9, 10.

\begin{enumerate}
	\item[\(t_3\):] We have: 
		\begin{align*}
			t_7t_1(t_3t_9t_3)&\sim_{\ref{rel:alpha}} t_7t_1\alpha_{39}t_3t_9\sim
			t_7t_8t_3t_9 \\
			&\sim_{\ref{rel:stab71}} t_7t_1t_3t_9\in [t_7t_1t_3t_9].
		\end{align*}
	\item[\(t_9\):] It goes back: \(t_7t_1t_3t_9t_9 = t_7t_1t_3\in[t_7t_1t_3]\).
	\item[\(t_{10}\):] We have \(t_7t_1t_3t_9t_{10}\sim
		t_7t_8t_{10}t_6\in[t_7t_1t_2t_4]\).
		\begin{align*}
			(t_7t_1t_3t_9)t_{10} &\sim_{\ref{rel:stab7139}}t_5t_2(t_3t_{13}t_{10}) \sim_{\ref{rel:beta}}
			t_5t_2\beta_{3,13,10}t_3t_6 \\
			&\sim t_{12}t_8t_3t_6 \sim_{\ref{rel:stab712}} t_7t_8t_{10}t_6\in [t_7t_1t_2t_4].
		\end{align*}
		The last \(\sim\) follows by using the relation \(t_{12}t_8t_3\sim
		t_7t_8t_{10}\in[t_7t_1t_2]\). 
\end{enumerate}
This completes the coset \([t_7t_1t_3t_9]\). There are no new cosets.

\subsubsection{\([t_7t_1t_2t_4]\)}
\label{sssec:7124}

The double coset \([t_7t_1t_2t_4]\) has the relations \(t_7t_1t_2t_4\sim
t_3t_7t_2t_{12}\) and \(t_7t_1t_2t_4 \sim t_4t_{10}t_2t_{13}\). These are proved
in an analagous way to the relations in \([t_7t_1t_3t_9]\). The coset stabilizer
expands to:

\begin{align}
	\label{rel:stab7124}
	\mathcal{N}^{(7,1,2,4)} &\geq \langle (1,13)(3,7)(4,11)(5,12)(6,8)(10,14), \\
	&(1,10,12)(3,5,8)(4,13,7)(6,14,11)\rangle.\nonumber
\end{align}

Since \(|\mathcal{N}^{(7124)}|\geq 12\), the number of cosets in \([t_7t_1t_2t_4]\)
is at most 14.  The orbits of \(\mathcal{N}^{(7124)}\) are: \(\{\{2\}, \{9\},
\{1,3,...,8,10,..., 14\}\}\). We choose the orbit representatives: 2, 4, 9.

\begin{enumerate}
	\item[\(t_2\):] We have 
		\begin{align*}
			(t_7t_1t_2t_4)t_2 & \sim_{\ref{rel:stab7124}}t_{13}t_{12}(t_2t_7t_2) \sim_{\ref{rel:alpha}}
			t_{13}t_{12}\alpha_{2,7}t_2t_7 \\
			&\sim t_5t_6t_2t_7 \sim_{\ref{rel:stab7}} (t_{12}t_6t_2)t_7\\
			&\sim_{\ref{rel:stab713}} t_3t_6t_9t_7\in[t_7t_1t_3t_9].
		\end{align*}
	\item[\(t_4\):] It goes back \(t_7t_1t_2t_4t_4 = t_7t_1t_2\in[t_7t_1t_2]\).
	\item[\(t_9\):] We have
		\begin{align*}
			t_7t_1(t_2t_4t_9) &\sim_{\ref{rel:beta}} t_7t_1 \beta_{2,4,9}t_2t_{11}\sim
			t_{10}t_6t_2t_{11}\\
			&\sim_{\ref{rel:stab71}} t_3t_{13}t_2t_{11}\in[t_7t_1t_2t_4].
		\end{align*}
\end{enumerate}

As there are no new cosets, we have completed the double coset enumeration
process.

\subsection{Cayley graph and coset table.}
\label{ssec:cayley-table}

We can represent the work in this section with a collapsed Cayley graph, see Figure
\ref{fig:cayley-graph}.  The circles represent double cosets and lines represent
multiplication by \(t_i\)s.  The numbers inside of the circles represent the
number of single cosets within the double coset, while the numbers on the
outside of the circles indicate the number of \(t_i\)s going to the next double
coset.

\begin{center}
	\begin{table}[h]
		\caption{Double Cosets \([\omega] = \mathcal{M}\omega\mathcal{N}\)}
	\begin{tabular}{lll}\hline\noalign{\vskip 0.05in}
		Label \([\omega]\) & Coset Stabilizing Subgroup \(\mathcal{N}^{(\omega)}\) & Number \\
		& & of cosets \\ \hline
		\([\star]\) & \(\mathcal{N}\) & 1 \\
		\([t_7]\) & \(\mathcal{N}^{(7)}\cong \Sigma_4\) & 7 \\
		\([t_7t_1]\) & \(\mathcal{N}^{(7,1)}\cong \mathbb{Z}_2\times\mathbb{Z}_2\) & 42 \\
		\([t_7t_1t_2]\) & \(\mathcal{N}^{(7,1,2)}\cong \mathbb{Z}_2\) & 84 \\
		\([t_7t_1t_3]\) & \(\mathcal{N}^{(7,1,3)}\cong \mathbb{Z}_2\) & 84\\
		\([t_7t_1t_2t_3]\) & \(\mathcal{N}^{(7,1,2,3)}\cong \mathbb{Z}_2\) & 84 \\
		\([t_7t_1t_2t_4]\) & \(\mathcal{N}^{(7,1,2,4)}\cong A_4\) & 14\\
		\([t_7t_1t_3t_9]\) & \(\mathcal{N}^{(7,1,3,9)}\cong A_4\)& 14\\ \hline
	\end{tabular}
	\label{tab:cosets}
	\end{table}
\end{center}

\begin{figure}[h]
\tikzstyle{dcoset}=[circle,fill=white,draw=black,text=black, minimum size=10 mm]
\begin{tikzpicture}
	\node[dcoset] (*)	[label=below:{[\(\star\)]}] 
				[label={5:14}] {1};
	\node[dcoset] (7) 	[right=1.5cm of *] [label=below:{[\(t_7\)]}] 
				[label={175:2}]
				[label={5:12}] {7};
	\node[dcoset] (71) 	[right=1.5cm of 7]  [label=below:{[\(t_7t_1\)]}]
				[label={175:2}]
				[label={45:4}]
				[label={-30:4}] {42};
	\node[dcoset] (712) [above right=1cm and 1.5 cm of 71]
	[label=above:{[\(t_7t_1t_2\)]}]
				[label={196:2}] 
				[label={245:1}]
				[label={5:2}]
				[label={-40:6}]{84};
	\node[dcoset] (713) [below right=1cm and 1.5cm of 71]  [label=below:{[\(t_7t_1t_3\)]}] 
				[label={154:2}]
				[label={115:1}]
				[label={40:6}]
				[label={-5:2}]{84};
	\node[dcoset] (7123) [right=2.75cm of 71] [label=left:{[\(t_7t_1t_2t_3\)]}] 
				[label={92:6}]
				[label={-92:6}]{84};
	\node[dcoset] (7139) [right=2.25cm of 713] [label=below:{[\(t_7t_1t_3t_9\)]}]
				[label={-175:12}]
				[label={95:1}] {14};
	\node[dcoset] (7124) [right=2.25cm of 712] [label=above:{[\(t_7t_1t_2t_4\)]}]  
				[label={175:12}]
				[label={-95:1}]{14};
	
	\draw[-] (*) -- (7);
	\draw[-] (7) -- (71);
	\draw[-] (71) to [bend left=30] (712);
	\draw[-] (71) to [bend right=20] (713);
	\draw[-] (712) to [bend right=20] (713);
	\draw[-] (713) to (7139);
	\draw[-] (712) to (7124);
	\draw[-] (7139) to (7124);
	\draw[-] (712) to [bend left=20] (7123);
	\draw[-] (713) to [bend right=20] (7123);
	\draw[-] (71) to [in=140, out=110,loop] node[above=1pt] {2+2} (71);
	\draw[-] (712) to [in=160, out=130, loop] node[above left=0pt] {1+2} (712);
	\draw[-] (713) to [in=210, out=180, loop] node[below left] {1+2} (713);
	\draw[-] (7123) to [in=15, out=-15,loop] node[above=2pt] {1+1} (7123);
	\draw[-] (7124) to [in=15, out=-15,loop] node[above=2pt] {1} (7124);
	\draw[-] (7139) to [in=15, out=-15,loop] node[above=2pt] {1} (7139);
\end{tikzpicture}
\caption{Collapsed Cayley Graph of \(\mathcal{G}\) Over \(\mathcal{M}\)}
\label{fig:cayley-graph}
\end{figure}
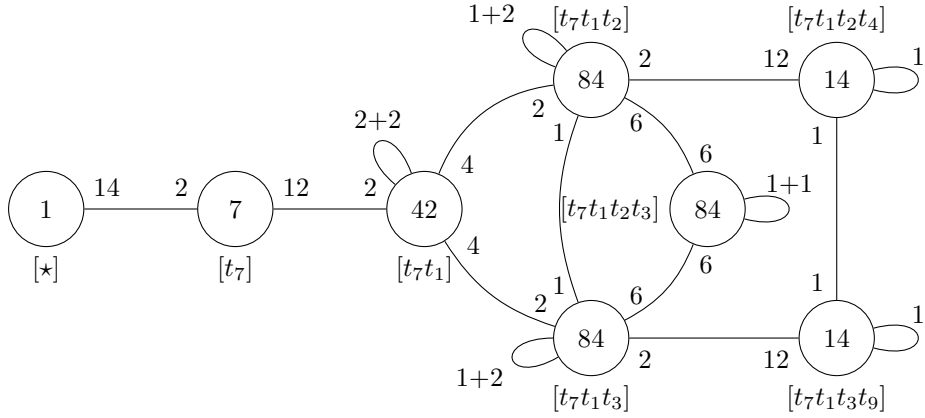

\section{Main Result}
\label{sec:main-result}

\subsection{\(\mathcal{G}\cong M_{22}\).}
\label{ssec:main-proof}

We use Iwasawa's Lemma and the transitive action of \(\mathcal{G}\) on the set
of single cosets \(\Omega:=\{\mathcal{M}\omega\}\) to prove \(\mathcal{G}\) is simple of
order 443,520.  By \cite{ATLAS} and \cite{Par1} there is only one simple group
of order 443,520, the Mathieu Group \(M_{22}\).  We will conclude that
\(\mathcal{G}\cong M_{22}\).

\begin{lemma}
	The order of \(\mathcal{G}\) is 443,520.  Furthermore, \(\mathcal{G}\) acts
	faithfully on \(\Omega=\mathcal{G/M}\).
	\label{lem:order-faithful}
\end{lemma}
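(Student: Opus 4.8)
The plan is to read the order of $\mathcal{G}$ directly off the double coset enumeration carried out in Section \ref{sec:DCE}, and then bound it from both sides. First I would count the single cosets $\mathcal{M}\omega$ by summing the last column of Table \ref{tab:cosets}: the double cosets contribute $1+7+42+84+84+84+14+14 = 330$ single cosets. Since the double coset enumeration is closed—every product $\mathcal{M}\omega t_i$ has been shown to land in one of the eight listed double cosets (the Cayley graph in Figure \ref{fig:cayley-graph} has no outgoing edges to new vertices)—the set $\Omega = \{\mathcal{M}\omega\}$ has at most $330$ elements. Because $\mathcal{G}$ acts transitively on $\Omega$ by right multiplication with point stabilizer containing $\mathcal{M}$, the orbit-stabilizer theorem gives
\[
	|\mathcal{G}| \leq |\mathcal{M}|\cdot|\Omega| \leq |\mathcal{M}|\cdot 330.
\]
By Theorem \ref{thm:M-iso} we have $\mathcal{M}\cong 2^3{:}L_3(2)$, so $|\mathcal{M}| = 8\cdot 168 = 1344$, whence $|\mathcal{G}|\leq 1344\cdot 330 = 443{,}520$. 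Combined with the lower bound $|\mathcal{G}|\geq 443{,}520$ from Corollary \ref{cor:main-order}, this forces $|\mathcal{G}| = 443{,}520$ and, as a byproduct, shows every inequality above is an equality: the coset stabilizer of each $\mathcal{M}\omega$ is exactly $\mathcal{M}$, and $|\Omega| = 330$.

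\emph{Faithfulness.}
For the action on $\Omega = \mathcal{G}/\mathcal{M}$ to be faithful I must show the kernel $K$ is trivial. The kernel of a coset action is the core $\bigcap_{g\in\mathcal{G}} \mathcal{M}^g$, the largest normal subgroup of $\mathcal{G}$ contained in $\mathcal{M}$. Here the key observation is that the symmetric generators move cosets around nontrivially: if $K$ fixes every coset then in particular $K$ fixes $\mathcal{M}$ and $\mathcal{M}t_7$, and more generally $K\leq\mathcal{M}$ is normal in $\mathcal{G}=\langle\mathcal{N},t_7\rangle$. I would argue that no nontrivial normal subgroup of $\mathcal{G}$ can sit inside $\mathcal{M}$: the subgroup $\mathcal{M}$ has index $330$, and one can check from the Cayley graph that the generators $t_i$ do not stabilize $\mathcal{M}$—indeed $\mathcal{M}t_7\neq\mathcal{M}$ since $[t_7]$ is a distinct double coset. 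The cleanest route, matching the paper's later use of Iwasawa's lemma, is to note that faithfulness will follow once simplicity is in hand, since a faithful-or-trivial dichotomy holds: if the action were unfaithful, $K$ would be a proper nontrivial normal subgroup, contradicting that $\mathcal{G}$ is simple.

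\emph{Main obstacle.}
The genuine content is already discharged by Section \ref{sec:DCE}: the hard part is the closure of the enumeration, i.e.\ verifying that each of the eight double cosets expands only to cosets already in the list, which is exactly what the orbit-by-orbit computations establish. What remains for this lemma is bookkeeping—summing $330$ and multiplying by $1344$—together with a careful handling of the faithfulness claim. The subtlety I would watch for is the logical ordering: faithfulness is most naturally deduced from simplicity, but simplicity (via Iwasawa) typically presupposes a faithful primitive action. I would therefore prove faithfulness directly by showing the core of $\mathcal{M}$ is trivial, arguing that any $1\neq K\trianglelefteq\mathcal{G}$ with $K\leq\mathcal{M}$ would be normalized by all $t_i$ and hence, since the $t_i$ together with $\mathcal{N}$ generate $\mathcal{G}$ and act transitively with the $t_i\notin\mathcal{M}$, would force $\mathcal{M}=\mathcal{G}$, contradicting $|\mathcal{G}:\mathcal{M}|=330$. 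This keeps the faithfulness argument self-contained and independent of the simplicity proof that follows.
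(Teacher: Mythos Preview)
Your order computation is exactly the paper's argument: count $|\Omega|\le 330$ from the closed enumeration, use $|\mathcal{M}|=1344$ from Theorem~\ref{thm:M-iso} to get $|\mathcal{G}|\le 443{,}520$, and combine with Corollary~\ref{cor:main-order} to force equality.

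On faithfulness you actually go further than the paper. The paper's proof of this lemma is silent on faithfulness---it states the claim but the written proof only establishes the order. You are right to flag the circularity risk with Iwasawa, and your instinct to argue directly via the core of $\mathcal{M}$ is the correct fix. However, your final sentence (``would force $\mathcal{M}=\mathcal{G}$'') is not justified as written: a nontrivial $K\trianglelefteq\mathcal{G}$ inside $\mathcal{M}$ being normalized by the $t_i$ does not by itself imply $\mathcal{M}=\mathcal{G}$. The clean way to finish is to note that the only nontrivial proper normal subgroup of $\mathcal{M}\cong 2^3{:}L_3(2)$ is $\langle s_i\rangle\cong 2^3$, and then use Relation~(\ref{rel:delta}) to compute $t_7 s_1 t_7=\delta_{1,7}s_7 s_1\notin\langle s_i\rangle$, so $\langle s_i\rangle$ is not normal in $\mathcal{G}$; and if $\mathcal{M}$ itself were normal, then $\mathcal{G}/\mathcal{M}$ would be a group of order $330$ generated by the image of the involution $t_7$, which is impossible. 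Hence the core is trivial.
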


\begin{proof}
	Since \(\Omega\) is a transitive \(\mathcal{G}\)-set of degree \(330\),
	\(|\mathcal{G}|\leq 330|\mathcal{G^M}|\), where \(\mathcal{G^M}\) is the
	stabilizer of the single coset \(M\). The inequality is due to the fact there
	could be additional collapsing that we are unaware of. But \(\mathcal{M}\) is
	only stabilized by elements of \(\mathcal{M}\).  Hence \(\mathcal{G^M=M}\) and
	\(|\mathcal{G^M}|=|\mathcal{M}|=1344\). We conclude \(|\mathcal{G}|\leq
	443,520\).  By Corollary \ref{cor:main-order} we have \(|\mathcal{G}|\geq
	443520\) and so equality follows.
\end{proof}

\begin{lemma}
	\(\mathcal{G}\) is perfect.
	\label{lem:G-perfect}
\end{lemma}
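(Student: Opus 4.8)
The plan is to show the abelianization $\mathcal{G}^{\mathrm{ab}} := \mathcal{G}/[\mathcal{G},\mathcal{G}]$ is trivial, which is equivalent to perfectness. Throughout, write an overline for the image of an element under the canonical projection $\mathcal{G}\to\mathcal{G}^{\mathrm{ab}}$. First I would observe that the control group contributes nothing: since $\mathcal{N}\cong L_3(2)$ is a nonabelian simple group it is perfect, so its image in the abelian group $\mathcal{G}^{\mathrm{ab}}$ collapses, and in particular $\bar{x}=\bar{y}=1$. Next, because $\mathcal{N}$ acts transitively on $\{t_1,\ldots,t_{14}\}$ by conjugation (Proposition \ref{prop:main}) and conjugate elements become equal in any abelian quotient, all the $\bar{t}_i$ coincide with a single element $\bar{t}$; from $t_i^2=1$ we get $\bar{t}^2=1$. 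At this stage $\mathcal{G}^{\mathrm{ab}}=\langle\bar{t}\rangle$ is a quotient of $C_2$.

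It then remains to rule out the order-$2$ possibility, and for this I would apply the projection to any defining relation of odd total degree in the symmetric generators. For example, Relation (\ref{rel:alpha}) reads $\alpha_{i,j}=t_it_jt_it_jt_i$ with $\alpha_{i,j}\in\mathcal{N}$, whose image is $1=\bar{t}^5=\bar{t}$ using $\bar{t}^2=1$; equivalently one may use the defining relator $(yt^{x^2})^5$, which has total $t$-degree $5$. Either way $\bar{t}=1$, so every generator of $\mathcal{G}^{\mathrm{ab}}$ is trivial and hence $\mathcal{G}=[\mathcal{G},\mathcal{G}]$.

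There is no serious obstacle here; the only point requiring care is the interplay of the two facts that make the argument work. The first is that $L_3(2)$ being perfect erases the control group from the abelianization, leaving a single surviving generator. The second is that among the three defining relators at least one — indeed both order-$5$ relators — has odd total $t$-degree, so that $\bar{t}$ is forced to be trivial rather than merely to square to the identity. By contrast, the relator $(xt)^8$ has even $t$-degree and yields no information, which is precisely why one must invoke an order-$5$ relation to finish.
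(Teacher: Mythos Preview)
Your proof is correct and essentially the same as the paper's: both use that $\mathcal{N}\cong L_3(2)$ is perfect to dispose of the control group, and then invoke Relation~(\ref{rel:alpha}) (equivalently the relator $(yt^{x^2})^5$) to kill the remaining generator $t$. The only difference is cosmetic---the paper argues directly that $\mathcal{N}\leq\mathcal{G}'$ and $t_7=\alpha_{1,7}[t_7,t_1]\in\mathcal{G}'$, while you phrase the same computation in the abelianization.
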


\begin{proof}
	Let \(\mathcal{G}'\) denote the commutator subgroup. Since
	\(\mathcal{G}=\langle \mathcal{N},t\rangle\) and \(\mathcal{N}\) is simple, we
	have \(\mathcal{N\leq G'}\). It therefore is sufficient to show
	\(t_7\in\mathcal{G}'\). Applying Relation (\ref{rel:alpha}) we get
	\(t=t_7\sim \alpha_{1,7}[t_7,t_1]\in\mathcal{G}'\) and we are finished.
\end{proof}

\begin{lemma}
	The point stabilizer, \(\mathcal{G^M\leq G}\), possesses a normal Abelian subgroup
	\(\mathcal{K}\) whose conjugates generate \(\mathcal{G}\).
	\label{lem:abelian-stabilizer}
\end{lemma}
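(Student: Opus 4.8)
The plan is to exhibit \(\mathcal K\) explicitly as the elementary abelian subgroup \(\mathcal K=\langle s_1,\dots,s_7\rangle\) generated by the products \(s_i=t_it_{i+7}\). By Lemma~\ref{lem:order-faithful} the point stabilizer is \(\mathcal{G^M}=\mathcal M\), and by Theorem~\ref{thm:M-iso} we have \(\mathcal M\cong 2^3:L_3(2)\), with \(\mathcal K\) realizing the normal \(2^3\) and \(\mathcal N\) a complement. Lemmas~\ref{lem:s-comms} and~\ref{lem:s-prod-rule} already give that \(\mathcal K\) is abelian, and it is normal in \(\mathcal M=\langle\mathcal N,\mathcal K\rangle\) because \(\mathcal N\) permutes the \(s_i\) and hence normalizes \(\mathcal K\). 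Thus \(\mathcal K\trianglelefteq\mathcal{G^M}\) is the required normal abelian subgroup, and the real content of the lemma is that its conjugates generate all of \(\mathcal G\).

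To prove the generation statement, let \(\hat{\mathcal K}\) denote the normal closure of \(\mathcal K\) in \(\mathcal G\); I must show \(\hat{\mathcal K}=\mathcal G\). The key move is to manufacture a \emph{nontrivial element of the control group} \(\mathcal N\) inside \(\hat{\mathcal K}\), since a priori the normal closure only visibly contains the \(s_i\). I would conjugate \(s_1\) by \(t_2\) and carry the \(t_2\)'s through using Relation~(\ref{rel:delta}); because \(\delta_{1,2}\) fixes the point \(2\), one computes
\[
	s_1^{t_2}=t_2s_1t_2=t_2\delta_{1,2}t_9s_1=\delta_{1,2}\,t_2t_9\,s_1=\delta_{1,2}s_2s_1.
\]
Rearranging gives \(\delta_{1,2}=s_1^{t_2}s_1s_2\in\hat{\mathcal K}\). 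Since \(\delta_{1,2}\) is a nontrivial involution of \(\mathcal N\), the subgroup \(\hat{\mathcal K}\cap\mathcal N\), which is normal in the simple group \(\mathcal N\cong L_3(2)\), is nontrivial and hence equals \(\mathcal N\). Therefore \(\mathcal N\subseteq\hat{\mathcal K}\), and together with \(\mathcal K\subseteq\hat{\mathcal K}\) this yields \(\mathcal M=\langle\mathcal N,\mathcal K\rangle\subseteq\hat{\mathcal K}\).

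Finally I would close the gap from \(\mathcal M\) up to \(\mathcal G\) using perfectness rather than maximality. Since \(\mathcal G=\langle\mathcal N,t_7\rangle=\langle\mathcal M,t_7\rangle\) and \(\mathcal M\subseteq\hat{\mathcal K}\), the quotient \(\mathcal G/\hat{\mathcal K}\) is generated by the single involution \(t_7\hat{\mathcal K}\), so it is cyclic of order at most \(2\). But \(\mathcal G/\hat{\mathcal K}\) is a quotient of the perfect group \(\mathcal G\) (Lemma~\ref{lem:G-perfect}), and a perfect cyclic group is trivial; hence \(\hat{\mathcal K}=\mathcal G\), completing the proof. The hard part is the opening step of the second paragraph: everything depends on pulling something outside \(\mathcal K\) into the normal closure, and the identity \(s_1^{t_2}=\delta_{1,2}s_2s_1\) is precisely the device that converts a conjugate of an \(s_i\) into a control-group element. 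Once a single element of \(\mathcal N\) is in hand, the simplicity of \(\mathcal N\) and the perfectness of \(\mathcal G\) finish the argument immediately.
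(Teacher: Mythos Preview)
Your proof is correct and follows essentially the same route as the paper: take \(\mathcal K=\langle s_i\rangle\cong 2^3\trianglelefteq\mathcal M\), conjugate an \(s_i\) by a symmetric generator and use Relation~(\ref{rel:delta}) to exhibit a nontrivial \(\delta\in\mathcal N\cap\hat{\mathcal K}\), then invoke simplicity of \(\mathcal N\) to swallow the control group. The only difference is the closing step: the paper finishes by conjugating \(y=t_1t_{12}t_1t_{12}t_1\) by \(t_1t_{12}\) to place \(t_1\) (hence all \(t_i\)) directly in the normal closure, whereas you appeal to Lemma~\ref{lem:G-perfect} to kill the at-most-order-two cyclic quotient---both are clean and equally short.
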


\begin{proof}
	Let \(\mathcal{K}=\langle s_7,s_1,s_2\rangle\leq \mathcal{G^M=M}\) and let
	\(\bar{\mathcal{K}}\) be the normal closure of \(\mathcal{K}\) in
	\(\mathcal{G}\). 
	
	Using Relation (\ref{rel:delta}), we have
	\(t_1t_8=\delta_{5,1}t_5t_1t_8t_{12}\).  Using Relation (\ref{rel:delta})
	\[
		s_1^{t_5}=t_5(t_1t_8)t_5=t_5\delta_{5,1}t_5t_1t_8t_{12}t_5 =
		\delta_{5,1}s_1s_5=\delta_{5,1}s_7\in \bar{\mathcal{K}}.
	\] 
	Since \(s_7s_1^{t_5}\in\bar{\mathcal{K}}\) we conclude
	\(\delta_{5,1}\in\bar{\mathcal{K}}\). Since \(\delta_{5,1}\) is an
	involution and all involutions are conjugate in \(\mathcal{N}\), we know
	\(y\in\mathcal{N}\).

	Conjugating the relation \(y=t_1t_{12}t_1t_{12}t_1\) by \(t_1t_{12}\), yields
	\(t_1\in \bar{\mathcal{K}}\). Since the conjugacy class of \(y\) is
	transitive, we have \(t_1,...,t_{14}\in \bar{\mathcal{K}}\). Hence,
	\(x=t_7t_6t_5t_4t_3t_2t_1t_7\in \bar{\mathcal{K}}\). Finally, we have
	\(\bar{\mathcal{K}}=\langle x,y,t_1\rangle=\mathcal{G}\).
\end{proof}

\begin{lemma}
	The group \(\mathcal{G}\) acts primitively on \(\Omega\).
	\label{lem:G-prim}
\end{lemma}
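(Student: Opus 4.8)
The plan is to establish primitivity through the standard equivalence: the transitive action of \(\mathcal{G}\) on \(\Omega=\mathcal{G}/\mathcal{M}\) is primitive precisely when the point stabilizer \(\mathcal{G^M}=\mathcal{M}\) (identified in Lemma~\ref{lem:order-faithful}) is a maximal subgroup of \(\mathcal{G}\). So I would prove that \(\mathcal{M}\) is maximal, that is, that every subgroup \(\mathcal{H}\) with \(\mathcal{M}\le\mathcal{H}\le\mathcal{G}\) equals \(\mathcal{M}\) or \(\mathcal{G}\).

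Suppose \(\mathcal{M}\subsetneq\mathcal{H}\). Since \(\mathcal{N}\le\mathcal{M}\le\mathcal{H}\), for any \(\omega\in\mathcal{H}\) the subgroup \(\mathcal{H}\) contains the entire double coset \(\mathcal{M}\omega\mathcal{M}\) as a subset, and the subgroup \(\langle\mathcal{M},\omega\rangle\) depends only on that double coset. Picking \(\omega\notin\mathcal{M}\), the double coset enumeration of Section~\ref{sec:DCE} (Table~\ref{tab:cosets}, Figure~\ref{fig:cayley-graph}) lets me replace \(\omega\) by a representative of one of the finitely many nontrivial cosets \([t_7],[t_7t_1],\ldots,[t_7t_1t_3t_9]\). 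The key observation is that \(\mathcal{M}t_7\mathcal{M}\) already contains every symmetric generator: conjugating \(t_7\) by \(\mathcal{N}\le\mathcal{M}\) produces all fourteen \(\mathcal{N}\)-conjugates \(t_1,\ldots,t_{14}\in\mathcal{M}t_7\mathcal{M}\). Consequently, as soon as I know \(t_7\in\mathcal{H}\), I get \(\mathcal{H}\supseteq\langle\mathcal{N},t_7\rangle=\mathcal{G}\) by Proposition~\ref{prop:main} and Remark~\ref{rem:main}.

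The remaining, and central, task is therefore to show that adjoining a representative \(\omega\) of any nontrivial double coset to \(\mathcal{M}\) produces a single symmetric generator, whence generation of \(\mathcal{G}\) follows as above. For the representative \(t_7\) this is immediate. For the longer representatives I would run the same coset-collapsing computations used in Section~\ref{sec:DCE}: using Relations~\eqref{rel:alpha}--\eqref{rel:gamma} to rewrite products \(g_1\,\omega\,g_2\) with \(g_1,g_2\in\mathcal{M}\) until a word of length one is exposed, placing some \(t_i\) in \(\langle\mathcal{M},\omega\rangle\). The arithmetic constraint that any proper block through the base point has size a divisor of \(330\) strictly between \(1\) and \(330\), and must be an \(\mathcal{M}\)-invariant union of the suborbits grouping the double cosets of Figure~\ref{fig:cayley-graph}, can be used to trim this casework.

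The hard part will be the bookkeeping in the last step: controlling the rewriting for the length-four representatives \(t_7t_1t_2t_4\) and \(t_7t_1t_3t_9\), whose coset stabilizers are the comparatively large groups \(A_4\), so that the relations genuinely collapse a product all the way down to a single generator rather than merely back into a short coset. This is the same phenomenon that made those two double cosets close up so quickly during the enumeration, and I expect the connectivity visible in the collapsed Cayley graph to show that each nontrivial double coset is joined to \([t_7]\), so that no proper \(\mathcal{H}\) can contain \(\mathcal{M}\) without containing \(t_7\), forcing \(\mathcal{H}=\mathcal{G}\) and hence primitivity.
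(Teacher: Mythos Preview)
Your approach via maximality of $\mathcal{M}$ is logically equivalent to the paper's block argument (a block $\mathcal{B}$ through $\mathcal{M}$ corresponds to the intermediate subgroup $\mathcal{H}=\mathrm{Stab}_{\mathcal{G}}(\mathcal{B})$, and $\mathcal{B}$ is precisely the set of cosets $\mathcal{M}h$ with $h\in\mathcal{H}$). The real difference is in how the argument is closed. You propose, for each double-coset representative $\omega$, to manipulate products $g_1\omega g_2$ with $g_i\in\mathcal{M}$ via Relations~\eqref{rel:alpha}--\eqref{rel:gamma} until a single $t_i$ appears in $\langle\mathcal{M},\omega\rangle$. That would work, but it is exactly the laborious rewriting you flag as ``the hard part,'' and you do not actually carry it out.

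The paper sidesteps all of this with one structural observation read straight off Figure~\ref{fig:cayley-graph}: every double coset other than $[\star]$ and $[t_7]$ carries a self-loop, meaning some $t_i$ sends $\mathcal{M}\omega$ back into $[\omega]$. Since $\mathcal{N}\le\mathcal{H}$ forces $[\omega]\subset\mathcal{B}$, the existence of such a loop gives $\mathcal{B}t_i\cap\mathcal{B}\neq\emptyset$, hence $t_i\in\mathcal{H}$. Transitivity of $\mathcal{N}$ on $\{t_1,\dots,t_{14}\}$ then places every $t_j$ in $\mathcal{H}$, so $\mathcal{H}=\mathcal{G}$. The case $[\omega]=[t_7]$ is even more direct: $\mathcal{M}t_7\in\mathcal{B}$ already says $t_7\in\mathcal{H}$. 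Thus no coset rewriting is needed at all; the ``connectivity'' you hope for in your final paragraph is replaced by the purely local fact that each node except $[\star]$ and $[t_7]$ has a loop.
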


\begin{proof}
	Let \(\mathcal{B}\) be a nontrivial block. The action of \(\mathcal{G}\) on
	\(\Omega\) is transitive so we may assume \(\mathcal{M\in B}\). The action of
	\(\mathcal{N}\) on \(\mathcal{B}\) therefore preserves \(\mathcal{B}\) as
	\(\mathcal{MN} = \mathcal{M}\).

	Since \(\mathcal{B}\) is nontrivial, there exists a word \(\omega\) in the
	\(t_i\)s such that \(\mathcal{M}\omega\neq \mathcal{M}\). There are two
	possibilities for such an \(\omega\). It is either in \([t_7]\) or it is not. If
	it is in \([t_7]\), then \(\mathcal{B}\) is stabilized by all of the \(t_i\)s
	and is therefore trivial.

	Suppose \(\omega\) is not in \([t_7]\), then the action of \(\mathcal{N}\)
	preserves \(\mathcal{B}\), so we get the entire double coset
	\(\omega\in[\omega]\subset\mathcal{B}\). Each of the double cosets, distinct
	from \([\star],[t_7]\) are stabilized by a \(t_i\). It follows that
	\(\mathcal{B}\) is stabilized by right multiplication by \(t_i\) for all
	\(i\). Hence, \(\mathcal{B} = \Omega\) and so \(\mathcal{B}\) is trivial.
\end{proof}

At last, we apply Iwasawa's Lemma:

\begin{proof}[Proof of Main Theorem]
	Lemmas \ref{lem:order-faithful}, \ref{lem:G-perfect},
	\ref{lem:abelian-stabilizer}, \ref{lem:G-prim}, and Iwasawa's lemma imply
	\(\mathcal{G}\) is simple.
	
	Since \(|\mathcal{G}|=443,520\), we have \(\mathcal{G}\cong M_{22}\) by
	\cite{ATLAS}, and \cite{Par1}. 
\end{proof}

\section{Maximal Subgroups of \(M_{22}\)}
\label{sec:comparison}

\subsection{The presentation.}
\label{ssec:comparison}

The presentation we have constructed for \(M_{22}\) is
\begin{align*}
	M_{22} = \langle x,y,t &\mid x^7, y^2, (xy)^3, [x,y]^4, \\
	&t^2, [t^{x^2},yx^{-1}], [t,y], \\
	&(yt^{x^2})^5, (xyx^2t^x)^5, (xt)^8\rangle.
\end{align*}
where the first line of relations corresponds to the control group, \(L_3(2)\),
the first and second corresopnd to the progenitor \(2^{\star 14}:L_3(2)\), and
finally all three define \(M_{22}\). We will identify our presentation with the
one from Remark \ref{rem:main} using the permutations therein.

\subsection{Maximal subgroups.}
\label{sec:maximal-subs}

The maximal subgroups of \(M_{22}\) are classified. There are 8 conjugacy
classes of subgroups inside \(M_{22}\). We have the following representatives of
conjugacy classes of maximal subgroups in terms of our presentation:

\begin{align*}
	\langle x, t^{x^6t}\rangle &\cong L_3(4); \\
	\langle xt, y^{x^3}\rangle &\cong 2^4:A_6; \\
	\langle x,t^{xtx^4t}\rangle &\cong A_7; \\
	\langle x, t^{x^3tx^6t}\rangle &\cong A_7; \\
	\langle xt,t^{xyx^4}\rangle &\cong 2^4:S_5; \\
	\langle x,y,tt^{x^6 yx} \rangle &\cong 2^3:L_3(2); \\
	\langle x^2t, y^{x^4}\rangle &\cong M_{10}; \\
	\langle xy,t\rangle &\cong L_2(11).
\end{align*}

\begin{table}[h]
	\caption{The MOG Labeling.}
	\begin{tabular}{|cc|cc|cc|} \hline
		24 & 14 & 17 & 11 & 22 & 19 \\
		23 & 8 & 4 & 13 & 1 & 9 \\ \hline 
		3 & 20 & 16 & 7 & 12 & 5 \\
		15 & 18 & 10 & 2 & 21 & 6 \\ \hline
	\end{tabular}
	\label{table:mog}
\end{table}

Indeed, we can use the MOG array as described in \cite{curt-mog} with the
labelling in Table \ref{table:mog} to see these isomorphisms. The following can
be checked by hand using the permutations in Remark

\ref{rem:main}:
\begin{enumerate}
	\item[\(L_3(4)\):] the stabilizer of \(\{4\}\);
	\item[\(2^4:A_6\):] the stabilizer of the hexad \(\{5,8,9,10,17,18\}\);
	\item[\(A_7\):] the stabilizer of the heptad \(\{1,8,12,14,15,17,20\}\);
	\item[\(A_7\):] the stabilizer of the heptad \(\{5,6,11,16,19,20,21\}\);
	\item[\(2^4:S_5\):] the stabilizer of the pair \(\{5,18\}\);
	\item[\(2^3:L_3(2)\):] was proven earlier; however, it is the 
		stabilizer of the octad \(\{2,3,4,7,9,13,18,22\}\);
	\item[\(M_{10}\):] the stabilizer of the dodecad
		\(\{2,4,5,11,12,16,18,19,20,22\}\);
	\item[\(L_2(11)\):] the stabilizer of the endecad
		\(\{1,2,5,7,8,9,10,14,15,17,22\}\).
\end{enumerate}

\subsection{Covering groups as homomorphic images}

The progenitor \(2^{\star 14}:L_3(2)\) has at least two other important
homomorphic images related to \(M_{22}\). We have
\begin{align*}
	2\cdot M_{22} &\cong \frac{2^{\star 14}:L_3(2)}{(xyx^2t^x)^5,(xt)^8}; \\
	3\cdot M_{22} &\cong \frac{2^{\star 14}:L_3(2)}{(xyx^2t^x)^5,(yt^{x^2})^5}.
\end{align*}
These isomorphisms can be checked with a computer algebra software. It would be
interesting to see if the other two covers, \(6\cdot M_{22}\) and \(12\cdot
M_{22}\), appear as homomorphic images of this progenitor as well.

\bibliographystyle{alpha}
\bibliography{mybib}

\newcommand{\etalchar}[1]{$^{#1}$}
\def\cprime{$'$}
\begin{thebibliography}{CCN{\etalchar{+}}85}

\bibitem[BC06]{BC2}
J.~D. Bradley and R.~T. Curtis.
\newblock Symmetric generation and existence of {$J\sb 3:2$}, the automorphism
  group of the third {J}anko group.
\newblock {\em J. Algebra}, 304(1):256--270, 2006.

\bibitem[BC10a]{BC3}
J.~D. Bradley and R.~T. Curtis.
\newblock Symmetric generation and existence of {${\rm McL}: 2$}, the
  automorphism group of the {M}c{L}aughlin group.
\newblock {\em Comm. Algebra}, 38(2):601--617, 2010.

\bibitem[BC10b]{bray-curt-10}
John~N. Bray and Robert~T. Curtis.
\newblock The {L}eech lattice {$\Lambda$} and the {C}onway group {$\cdot{\rm
  O}$} revisited.
\newblock {\em Trans. Amer. Math. Soc.}, 362(3):1351--1369, 2010.

\bibitem[CCN{\etalchar{+}}85]{ATLAS}
J.~H. Conway, R.~T. Curtis, S.~P. Norton, R.~A. Parker, and R.~A. Wilson.
\newblock {\em Atlas of finite groups: Maximal subgroups and ordinary
  characters for simple groups}.
\newblock Oxford University Press, Eynsham, 1985.

\bibitem[CF09]{cur-fair-09}
R.~T. Curtis and B.~T. Fairbairn.
\newblock Symmetric representation of the elements of the {C}onway group
  {${\cdot}0$}.
\newblock {\em J. Symbolic Comput.}, 44(8):1044--1067, 2009.

\bibitem[CH96]{cur-has}
R.~T. Curtis and Z.~Hasan.
\newblock Symmetric representation of the elements of the {J}anko group {$J\sb
  1$}.
\newblock {\em J. Symbolic Comput.}, 22(2):201--214, 1996.

\bibitem[Cur76]{curt-mog}
R.~T. Curtis.
\newblock A new combinatorial approach to {$M\sb{24}$}.
\newblock {\em Math. Proc. Cambridge Philos. Soc.}, 79(1):25--42, 1976.

\bibitem[Cur92]{curt-mathieu}
R.~T. Curtis.
\newblock Symmetric presentations. {I}. {I}ntroduction, with particular
  reference to the {M}athieu groups {$M\sb {12}$} and {$M\sb {24}$}.
\newblock In {\em Groups, combinatorics \& geometry ({D}urham, 1990)}, volume
  165 of {\em London Math. Soc. Lecture Note Ser.}, pages 380--396. Cambridge
  University Press, Cambridge, 1992.

\bibitem[Cur93]{curt-janko}
R.~T. Curtis.
\newblock Symmetric presentations. {II}. {T}he {J}anko group {$J\sb 1$}.
\newblock {\em J. London Math. Soc. (2)}, 47(2):294--308, 1993.

\bibitem[Cur07]{Curt2}
Robert~T. Curtis.
\newblock {\em Symmetric generation of groups}, volume 111 of {\em Encyclopedia
  of Mathematics and its Applications}.
\newblock Cambridge University Press, Cambridge, 2007.
\newblock With applications to many of the sporadic finite simple groups.

\bibitem[Fai11a]{fair-progress}
Ben Fairbairn.
\newblock Recent progress in the symmetric generation of groups.
\newblock In {\em Groups {S}t {A}ndrews 2009 in {B}ath. {V}olume 2}, volume 388
  of {\em London Math. Soc. Lecture Note Ser.}, pages 384--394. Cambridge
  University Press, Cambridge, 2011.

\bibitem[Fai11b]{fair-coxeter}
Ben Fairbairn.
\newblock Symmetric presentations of {C}oxeter groups.
\newblock {\em Proc. Edinb. Math. Soc. (2)}, 54(3):669--683, 2011.

\bibitem[Iwa41]{Iw1}
Kenkiti Iwasawa.
\newblock \"{U}ber die endlichen {G}ruppen und die {V}erb\"ande ihrer
  {U}ntergruppen.
\newblock {\em J. Fac. Sci. Imp. Univ. Tokyo. Sect. I.}, 4:171--199, 1941.

\bibitem[{Mat}61]{mathieu-61}
E.~{Mathieu}.
\newblock {Memoire sur l\'etude des fonctions de plusieurs quantit\'es sur la
  manie\'ere de les former et sur les substitutions qui les laissent
  invariable.}
\newblock {\em {Liouville J. (2)}}, 18:241--323, 1861.

\bibitem[{Mat}73]{mathieu-73}
E.~{Mathieu}.
\newblock {Sur la fonction cinq fois transitive de 24 quantit\'es.}
\newblock {\em {Liouville J. (2)}}, 18:25--47, 1873.

\bibitem[Par70]{Par1}
David Parrott.
\newblock On the {M}athieu groups {$M\sb{22}$} and {$M\sb{11}$}.
\newblock {\em J. Austral. Math. Soc.}, 11:69--81, 1970.

\bibitem[Wie03]{wied}
Corinna Wiedorn.
\newblock A symmetric presentation for {$J\sb 1$}.
\newblock {\em Comm. Algebra}, 31(3):1329--1357, 2003.

\bibitem[Wil09]{wilson-09}
Robert~A. Wilson.
\newblock {\em The finite simple groups}, volume 251 of {\em Graduate Texts in
  Mathematics}.
\newblock Springer-Verlag London, Ltd., London, 2009.

\bibitem[Wit37]{witt1}
Ernst Witt.
\newblock Die 5-fach transitiven gruppen von mathieu.
\newblock {\em Abh. Math. Sem. Univ. Hamburg}, 12(1):256--264, 1937.

\end{thebibliography}
\end{document}